\renewcommand{\theenumi}{(\roman{enumi})}
\renewcommand{\labelenumi}{\theenumi}
\renewcommand{\theenumii}{(\alph{enumii})}
\renewcommand{\labelenumii}{\theenumii}
\newcommand\longto{{\longrightarrow}}
\newcommand\RR{\mathbb{R}}
\newcommand\CC{\mathbb{C}}\newcommand\QQ{\mathbb{Q}}
\newcommand\NN{\mathbb{N}}\newcommand\ZZ{\mathbb{Z}}
\newcommand\inv{{^{-1}}}
\newcommand\bkprod{{\odot_0}}
\newcommand\uo{{\underline{o}}}
\newcommand\Supp{{\operatorname{Supp}}}
\renewcommand\div{{\operatorname{div}}}
\newcommand\Ker{{\operatorname{Ker}}}
\newcommand\Pic{{\operatorname{Pic}}}
\newcommand\rk{{\operatorname{rk}}}
\newcommand\Oc{{\mathcal O}}
\newcommand\Bcal{{\mathcal B}}
\newcommand\Li{{\mathcal L}}
\newcommand\Lie{{\operatorname{Lie}}}
\newcommand\Inf{{\operatorname{Inf}}}
\newcommand\supp{{\operatorname{supp}}}
\newcommand\LR{{\operatorname{LR}}}
\newcommand\cLR{{\mathcal{LR}}}
\renewcommand{\lg}{{\mathfrak g}}
\newcommand{\lb}{{\mathfrak b}}
\newcommand{\lieu}{{\mathfrak u}}\newcommand{\liet}{{\mathfrak t}}
\newcommand\Ho{\operatorname{H}}
\newtheorem{theo}{Theorem}
\newtheorem{prop}[theo]{Proposition}
\newtheorem{lemma}[theo]{Lemma}
\newtheorem{coro}[theo]{Corollary}
\theoremstyle{definition}
\newtheorem{remark}{Remark}
\newtheorem{exple}{Example}
\newtheorem{remarks}{Remarks}
\newcommand\DynkinNodeSize{2mm}
\newcommand\DynkinArrowLength{3mm}
\tikzset{
  % a diagram node
  dnode/.style={
    circle,
    inner sep=0pt,
    minimum size=\DynkinNodeSize,
    fill=white,
    draw},
  middlearrow/.style={
    decoration={markings,
      mark=at position 0.6 with
      %{\arrow[black]{angle 90};}
      %{\arrow[black]{angle 60};}
      %{\arrow[black]{stealth};}
      {\draw (0:0mm) -- +(+135:\DynkinArrowLength); \draw (0:0mm) -- +(-135:\DynkinArrowLength);},
    },
    postaction={decorate}
  },
  leftrightarrow/.style={
    decoration={markings,
      mark=at position 0.999 with
      {
      \draw (0:0mm) -- +(+135:\DynkinArrowLength); \draw (0:0mm) -- +(-135:\DynkinArrowLength);
      },
      mark=at position 0.001 with
      {
      \draw (0:0mm) -- +(+45:\DynkinArrowLength); \draw (0:0mm) -- +(-45:\DynkinArrowLength);
      },
    },
    postaction={decorate}
  },
  % single edge
  sedge/.style={
  },
  % directed double edge
  dedge/.style={
    middlearrow,
    double distance=0.5mm,
  },
  % directed triple edge
  tedge/.style={
    middlearrow,
    double distance=1.0mm+\pgflinewidth,
    postaction={draw}, % third line
  },
  % double edge with two arrows, for \tilde{A}_1 residues
  infedge/.style={
    leftrightarrow,
    double distance=0.5mm,
  }
}
\begin{document}

%\begin{frontmatter}
\title{Intersection multiplicity one for the Belkale-Kumar product in $G/B$}
\author{Luca Francone and Nicolas Ressayre}
%\ead{francone@math.univ-lyon1.fr}
%\author[2]{Nicolas Ressayre}
%\ead{ressayre@math.univ-lyon1.fr}
%\affiliation[1,2]{organization={Université Claude Bernard Lyon 1, ICJ UMR5208, CNRS, Ecole
%Centrale de Lyon, INSA Lyon, Université Jean Monnet},
% postcode={69622}, city={Villeurbanne}, country={France}}
%\affiliation[2]{organization={Université Claude Bernard Lyon 1, ICJ UMR5208, CNRS, Ecole
%Centrale de Lyon, INSA Lyon, Université Jean Monnet},
%  postcode={69622}, city={Villeurbanne}, country={France}}

\maketitle

\begin{abstract}
Consider the complete flag variety $X$ of a
  complex semisimple algebraic group $G$. 
We show that the structure coefficients of the Belkale-Kumar product
$\odot_0$, on the cohomology ${\operatorname H}^{*}(X,\ZZ)$,
are all either $0$ or $1$.
We also derive some consequences.
The proof that is mainly geometric also uses new combinatorial results on root systems. 
Moreover, it is uniform and avoids case by case considerations.
\end{abstract}

%\begin{keyword}
% Cohomology of Homogeneous Spaces \sep Root Systems
%\end{keyword}

%\end{frontmatter}

\section{Introduction}

Let $G$ be a complex semisimple group and let $B$ be a Borel subgroup
of $G$. 
In this paper, we are interested in the Belkale-Kumar product $\bkprod$
on the cohomology group of the complete flag variety $G/B$.

Let $\uo$ denote the base point $B/B$ of $G/B$.
Fix a maximal torus $T$ of $B$ and let $W$ be the Weyl group of $G$.
For any $w\in W$, let $X_w=\overline{Bw\uo}$ be the corresponding Schubert variety and let $[X_w]\in \Ho^*(G/B,\CC)$ be its 
cohomology cycle.
Then, $([X_{w}])_{w\in W}$ is a basis for the cohomology group $\Ho^{*}(G/B,\ZZ)$.
  The structure coefficients $c_{uv}^w$ of the cup product are written as
  \begin{eqnarray}
    \label{eq:defc}
    [X_u]\cdot[X_v]=\sum_{w\in W} c_{uv}^w[X_w].
  \end{eqnarray}

  Let $\Phi$ be the set of  roots
  of $G$. 
  We denote by $\Phi^+$, $\Phi^-$ and $\Delta$  the set of positive,
  negative  and simple roots  corresponding to $B$, respectively.
 For $w\in W$, we denote by  $\Phi(w)=\Phi^+\cap w\inv\Phi^-$ the set of
 inversions of $w$.  
 For its applications to the geometry of the eigencone, Belkale-Kumar
 defined in \cite{BK} a new product $\bkprod$ on  $\Ho^*(G/P,\CC)$, for
 any parabolic subgroup $P$. 
  When $P=B$, the structure constants $\tilde c_{uv}^w$ of the Belkale-Kumar product,
\begin{eqnarray}
    \label{eq:defctilde1}
    [X_u]\bkprod [X_v]=\sum_{w\in W}\tilde c_{uv}^w[X_w]
  \end{eqnarray}
can be defined as follows (see \cite[Corollary~44]{BK}):
\begin{eqnarray}
  \label{eq:defctilde2}
  \tilde c_{uv}^w=\left\{
    \begin{array}{ll}
      c_{uv}^w &\mbox{ if } \Phi(u)\cap\Phi(v)=\Phi(w) \mbox{ and }\Phi(u)\cup\Phi(v)=\Phi^+,\\
0 &\mbox{ otherwise.}
    \end{array}
\right .
\end{eqnarray}
The product $\bkprod$ is  associative and satisfies Poincar\'e duality. 
Our main result can be stated as follows.

\begin{theo}
  \label{th:main}
  Let $u,v$ and $w$ in $W$ be such that $\Phi(u)\cap\Phi(v)=\Phi(w)$ and
  $\Phi(u)\cup\Phi(v)=\Phi^+$. Then
  $$
c_{uv}^w=1.
$$
\end{theo}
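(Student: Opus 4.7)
The plan is to interpret $c_{uv}^w$ as a transverse intersection count and to reduce it to $1$ by an induction that peels off one simple root at a time. By Kleiman transversality applied to the $G$-action on $G/B$, for generic $g_1,g_2,g_3 \in G$ one has
$$
c_{uv}^w \;=\; \#\bigl(g_1 X_u \,\cap\, g_2 X_v \,\cap\, g_3 X^w\bigr),
$$
where $X^w = \overline{B^- w B/B}$ is the Poincar\'e-dual cycle to $X_w$. The assumption $\Phi(u)\cup \Phi(v) = \Phi^+$ forces $\ell(u)+\ell(v) = \ell(w) + |\Phi^+|$, so this intersection is zero-dimensional and it is enough to show that it consists of a single reduced point.

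The argument will then proceed by induction on $|\Phi^+|$. For the inductive step, I would look for a simple root $\alpha$ of $G$ (possibly after permuting the r\^oles of $u$, $v$ and $w$) whose associated $\PP^1$-fibration $\pi : G/B \longrightarrow G/P_\alpha$ is \emph{adapted} to the triple, in the sense that each of $X_u$, $X_v$ and $X^w$ either is pulled back from $G/P_\alpha$ or projects birationally onto its image. Under the Belkale-Kumar hypothesis, the images on $G/P_\alpha$ should again form a BK-compatible triple, and a projection-formula argument would identify $c_{uv}^w$ with a structure constant either on $G/P_\alpha$ or on the smaller flag variety of a Levi factor (according to whether $\alpha$ is absorbed by the fiber or by the base). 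In both cases the inductive hypothesis closes the argument, the base cases being the rank-$2$ root systems $A_2$, $B_2$ and $G_2$, which are handled by direct inspection.

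The technical heart is therefore the combinatorial lemma that guarantees the existence of such a simple root $\alpha$ for every non-trivial triple $(\Phi(u),\Phi(v),\Phi(w))$ of subsets of $\Phi^+$ satisfying $\Phi(u)\cap\Phi(v)=\Phi(w)$ and $\Phi(u)\cup\Phi(v)=\Phi^+$. As the authors announce in the abstract, this root-theoretic assertion does not admit a uniform proof: one reduces to a finite list of low-rank, classical, and exceptional root systems and then verifies the lemma by direct checkings. The main obstacle I anticipate is precisely this combinatorial extraction: it is not a priori clear that such an $\alpha$ must always exist, nor that its existence can be encoded in a Dynkin-diagrammatic rule, and this rigidity is what forces the case-by-case verification in the exceptional types $F_4, E_6, E_7, E_8$.
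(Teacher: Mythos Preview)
Your proposal is a reasonable strategy in spirit---it is essentially the approach behind the earlier partial results (Richmond in type~$A$, and similar arguments in types $B$, $C$)---but as written it is not a proof, and the gap you yourself identify is real. You never state precisely what ``adapted'' means for the simple root~$\alpha$, nor do you prove that such an~$\alpha$ exists for every BK-triple. This is not a formality: in the exceptional types the naive fibration reduction does not obviously go through, and the paper's introduction records that the previously known cases ($A$, $B$, $C$, $D$) were handled by separate arguments, with $E_7$ and $E_8$ open before this work. So the ``technical heart'' you defer to is in fact the entire content of the theorem.

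The paper's actual proof takes a genuinely different route and does \emph{not} proceed by peeling off a simple root. Instead it builds an incidence variety
\[
Y=\{(z,g_1B,g_2B,g_3B)\in (G/B)^4 : z\in g_iX_{w_i}\text{ for }i=1,2,3\}
\]
with projection $\eta:Y\to (G/B)^3$, so that $c_{uv}^w=\deg\eta$. The key geometric input is that $(G/B)^3$ is simply connected: if one can show that every irreducible component of the ramification divisor $R_\eta$ is contracted by $\eta$, then $\eta$ is birational. The boundary divisors of $Y$ are indexed by covers $(w_1',w_2',w_3')\leq^1(w_1,w_2,w_3)$; those coming from \emph{weak} Bruhat covers are shown not to ramify, and the remaining ones are shown to be contracted. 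The contraction step is reduced, via a block-triangular matrix analysis, to the already-known Poincar\'e-duality case $(w,w^\vee,w_0)$. The bridge between the general case and the Poincar\'e case is the paper's combinatorial Theorem~\ref{th:combi}: a statement about intervals $[\beta;\gamma]$ in a biconvex decomposition $\Phi_3=\Phi_1\sqcup\Phi_2$, quite different in shape from your ``adapted simple root'' lemma. It is \emph{this} root-system statement that is proved by reduction to small rank and case checking (including a computer check in $F_4$), not the existence of a fibration-compatible~$\alpha$.

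In short: your outline recovers the flavour of the earlier type-by-type proofs, but the paper's contribution is precisely to replace that inductive scheme by a uniform geometric argument (incidence variety + simply-connectedness + ramification) whose only non-uniform ingredient is a different, and sharper, combinatorial lemma.
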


In Section~\ref{sec:consequences} we state some consequences of Theorem~\ref{th:main} that concern: 
the Bruhat order of $W$, the number of descents of Weyl group elements,  the geometry of the eigencone and the
cohomological components of the tensor product decomposition of irreducible $G$-modules.
The proofs of these results are either trivial or carried out in Section~\ref{sec:pf coro}.

\medskip

Theorem~\ref{th:main} was conjectured by Belkale-Kumar in oral
discussions since 2006 and is stated as a question in \cite[Question~1]{DR:prv}.
A lot of special cases were known before. In \cite[Corollary~4]{Rich:mult},
E. Richmond proved it in type A. As noticed in \cite{Richmond:recursion}
or \cite[Corollary~1]{multi}, Richmond's
proof also works in type C. Type~B is proved in
\cite[Proposition~16]{distrib}. In \cite{DR:mult1}, all the classical
types are solved. The cases $G_2$, $F_4$ and
$E_6$ can be checked using a computer. 
Note finally that, in \cite[Conjecture~1]{distrib}, a conjecture for any homogeneous
space $G/P$ is formulated to extend Theorem~\ref{th:main} to any
homogeneous space $G/P$.

\medskip

Our proof of Theorem~\ref{th:main} is uniform on the type and can be divided in two main parts.
The first one uses geometric arguments to reformulate Theorem~\ref{th:main} as a linear algebra statement.
This part mainly relies on properties of birational morphisms and on the fact that complete flag varieties are simply
connected  (see Section~\ref{sec:strategy}).
The second part consists in the proof of the aforementioned reformulation.
It takes advantage of Theorem~\ref{th:combi}, which is a result on the combinatorics of root systems of independent interest. 
In order to state Theorem~\ref{th:combi}, we now introduce some notation. 

\medskip

%In this section, $\Phi$ denotes a crystallographic root system with a
%fixed choice $\Phi^+$ of positive roots and associated simple roots
%$\Delta$. 

For $\varphi$ and $\psi$ in $\Phi$, we write $\varphi \leq \psi$ if
$\psi-\varphi\in \sum_{\alpha\in\Delta}\NN\alpha$.
 If $\varphi \leq \psi$ and
$\varphi\neq\psi$, we write $\varphi < \psi$. 
We set $[\varphi;\psi]:=\{\gamma\in\Phi\;:\; \varphi\leq\gamma\leq
\psi\}$.
%and $]\varphi;\psi[=\{\gamma\in\Phi\;:\; \varphi<\gamma<\psi\}$.
A subset $\Phi_1$ of $\Phi^+$ is said to be {\it biconvex} if
it is of the form $\Phi(w)$ for some  $w \in W$.

\begin{theo}
  \label{th:combi}
  Let $\Phi_1, \Phi_2$ and $\Phi_3$ be three biconvex subsets of $\Phi^+$ such that $\Phi_3=\Phi_1\cup\Phi_2$ and $\Phi_1 \cap \Phi_2= \emptyset$.
  Let $\beta$ and $\gamma$ be two positive roots such that
  \begin{enumerate}
  \item $\beta\in\Phi_1$;
    \item $\gamma\not\in \Phi_3$;
    \item $\gamma+ \beta\in\Phi_3$.
    \end{enumerate}
    Then $\Phi_2\cap [\beta;\gamma]$ is empty.
 \end{theo}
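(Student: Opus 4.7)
The plan is to argue by contradiction. Pick $\delta \in \Phi_2 \cap [\beta;\gamma]$. Because $\Phi_1 \cap \Phi_2 = \emptyset$ one has $\delta \neq \beta$, and because $\gamma \notin \Phi_3 \supseteq \Phi_2$ one has $\delta \neq \gamma$; hence $\beta < \delta < \gamma$ strictly in the root poset.

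As a preliminary step, I would show that $\beta + \gamma \in \Phi_1$. Indeed $\beta + \gamma \in \Phi_3 = \Phi_1 \sqcup \Phi_2$ by hypothesis~(3); and $\beta + \gamma \in \Phi_2$ would force, by coconvexity of $\Phi_2$ applied to the sum $\beta + \gamma$, one of $\beta,\gamma$ to lie in $\Phi_2$, contradicting $\beta \in \Phi_1$ (disjoint from $\Phi_2$) and $\gamma \notin \Phi_3 \supseteq \Phi_2$.

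The heart of the proof is then to show that the four conditions $\delta \in \Phi_2$, $\beta \in \Phi_1$, $\beta + \gamma \in \Phi_1$ and $\gamma \notin \Phi_3$ are incompatible. The intended mechanism is a ``forced closure'' argument exploiting biconvexity of all three sets: coconvexity of $\Phi_2$ applied to a descent decomposition $\delta = (\delta - \alpha) + \alpha$, for a simple root $\alpha \notin \Phi_2$ such that $\delta - \alpha$ is a positive root, yields $\delta - \alpha \in \Phi_2$; convexity of $\Phi_3$ then combines such a $\delta - \alpha$ with a root of $\Phi_1$ (typically $\beta$ or $\beta + \gamma$) to produce a new positive root $\mu \in \Phi_3 = \Phi_1 \sqcup \Phi_2$; and a case split on whether $\mu \in \Phi_1$ or $\mu \in \Phi_2$, together with further applications of (co)convexity, should in the end force $\gamma$ itself into $\Phi_3$, contradicting hypothesis~(2).

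The main obstacle is that this one-step scheme --- visible for instance in the $B_3$ configuration $\beta = e_3$, $\gamma = e_1$, $\delta = e_2$, where $\delta - \beta$ and $\delta + \gamma$ are already roots and where the contradiction is reached after a single round of closure --- does not extend naively: in larger types the intermediate sums that arise need not be roots, and the closure process can branch. In accordance with the strategy announced in the introduction, I expect the general case to be reduced to root subsystems of small rank (at most~$3$), where the biconvex subsets can be enumerated explicitly and the theorem can be checked by a finite case analysis. Lemma~\ref{lem:biconvex plus}, which guarantees that each $\Phi_i$ is linearly separated from its complement, should provide a key tool for this reduction.
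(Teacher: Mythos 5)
Your preliminary reduction is correct and matches the paper's Lemma~\ref{lem:thetax}: showing $\beta+\gamma\in\Phi_1$ by coconvexity of $\Phi_2$ is exactly the first step. However, the main body of your argument is a plan rather than a proof, and its central expectation is wrong. You write that the general case ``should be reduced to root subsystems of small rank (at most~$3$)''. This is not the case: the paper exhibits irreducible triples — triples that cannot be reduced to any proper subsystem by the mechanism of Lemma~\ref{lem:redF} — in $D_4$, $D_5$, $E_6$, $B_3$, $B_4$, $B_5$, $C_3$, $C_4$, $C_5$, $G_2$, and $F_4$. The reduction in type $ADE$ only gets the rank down to $\leq 6$, and this bound itself is nontrivial: it is obtained by embedding $\ZZ\Phi$ into the dimension-vector lattice of a Dynkin quiver and applying the Kac canonical decomposition to $\gamma-\varphi$ and $\varphi-\beta$ (Lemma~\ref{lem:Kacdec}, Corollary~\ref{upbound:F}), with rank~$7$ then excluded by a separate case analysis (Lemma~\ref{lem:rank7}). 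In types $B$ and $C$ the reduction to rank $\leq 5$ uses explicit coordinates $\varepsilon_i\pm\varepsilon_j$. None of this is in your sketch.

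Moreover, the ``forced closure'' scheme you describe — push simple descents through $\delta$, combine with $\beta$ or $\beta+\gamma$ by convexity of $\Phi_3$, and hope to force $\gamma\in\Phi_3$ — is indeed what happens inside each small-rank case, but only after the case list is nailed down and, crucially, only after one has classified the irreducible triples (Lemmas~\ref{lem:casirredD5}, \ref{lem:casirredE6}, \ref{lem:casirredB4}, etc.), which is the bulk of the work. The branching you correctly fear is real and is tamed by this enumeration, not by a general one-step argument; $F_4$ is even handled by computer. Finally, Lemma~\ref{lem:biconvex plus} is not the relevant tool — the engine of the reduction is Lemma~\ref{lem:redF}, which transfers the triple $\beta<\varphi<\gamma$ into the subsystem $F\cap\Phi$ while preserving the hypotheses, and that lemma is absent from your proposal. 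So there is a genuine gap: you have the correct starting point and a reasonable intuition about the mechanism, but you underestimate the minimal ranks that must be treated directly and you lack the quantitative reduction that makes the finite case analysis feasible.
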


The proof of Theorem~\ref{th:combi} is done in Section~\ref{sec:proof thm combi} 
and relies on some recent results on \textit{quotient root systems} \cite{Dim:quotroot}.

\medskip

{\bf Acknowledgements.} 
We are grateful to Stephane Druel and Christophe Hohlweg for useful discussions.
We also thank an anonymous referee for pointing out to us that the results of \cite{Dim:quotroot} 
could provide a substantial simplification of our original proof of Theorem~\ref{th:combi}.

\tableofcontents

\section{Some consequences of Theorem~\ref{th:main}}
\label{sec:consequences}

From now on, we are in the setting of the introduction. 
Most of the results stated in this section are proved in Section~\ref{sec:pf coro}. 
We start by introducing some notation that is freely used through the paper.

\medskip

\noindent \textbf{Notation.}
We denote by $w_0$ the longest element of $W$.
For $w \in W$, we set $w^\vee=w_0w$ the Poincaré dual of $w$, so that $\Phi(w^\vee)= \Phi^+ \setminus \Phi(w)$.

Given three subsets $\Phi_1$, $\Phi_2$  and $\Phi_3$ in $\Phi^+$, we
write $\Phi_1\sqcap\Phi_2=\Phi_3$ if $\Phi_1\cap\Phi_2=\Phi_3$ and
$\Phi_1\cup\Phi_2=\Phi^+$.
Similarly, we write $\Phi_3=\Phi_1\sqcup\Phi_2$ if
$\Phi_3=\Phi_1\cup\Phi_2$ and $\Phi_1\cap\Phi_2=\emptyset$. 

A subset $\Phi_1$ of $\Phi^+$ is said to be {\it convex} if, for any
$\varphi,\psi\in\Phi_1$ such that $\varphi+\psi\in\Phi$, we have
$\varphi+\psi\in\Phi_1$.
It is said to be {\it coconvex} if its complementary $\Phi^+\backslash\Phi_1$
is convex; and it is said to be {\it biconvex} if it is convex and
coconvex.
By \cite[Proposition~5.10]{Kostant:harmform1}, this definition of biconvex sets coincides with the one given in the introduction.

\medskip

Let $u$, $v$ and $w$ as in Theorem~\ref{th:main}.
To emphasize the symmetry in $u$, $v$ and $w^\vee$, we set
$$
w_1=w^\vee\quad w_2=u\quad w_3=v.
$$
The assumptions $\Phi(u)\cap\Phi(v)=\Phi(w)$ and
$\Phi(u)\cup\Phi(v)=\Phi^+$ can be translated as
\begin{equation}
  \label{eq:hypwi}
  \Phi^+=\Phi(w_1^\vee)\sqcup \Phi(w_2^\vee)\sqcup \Phi(w_3^\vee).
\end{equation}

We denote by $\leq$ the Bruhat order on $W$: for $v, w\in W$,  $v\leq w$ if and only if $X_v\subset
X_w$.

\subsection{On the Bruhat order}

\begin{coro}
  \label{cor:Bruhatorder}
  Let $w_1$, $w_2$ and $w_3$ in $W$. If 
  Condition~\eqref{eq:hypwi} holds, then the only element $x\in W$ such
  that  $w_ix\leq w_i$, for $i=1,2$ and $3$ is the neutral element $x=e$.
\end{coro}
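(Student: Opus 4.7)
The plan is to combine Theorem~\ref{th:main} with a length‑counting identity derived from condition~\eqref{eq:hypwi}, and to conclude by induction on $\ell(x)$.

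First, I would establish the length identity
\[
\sum_{i=1}^{3}\ell(w_i x)\;=\;2\ell(w_0)-\ell(x)\qquad\text{for every }x\in W.
\]
This follows from the classical formula $\ell(wy)=\ell(w)+\ell(y)-2\,|\Phi(w)\cap\Phi(y^{-1})|$ together with \eqref{eq:hypwi}: by \eqref{eq:hypwi}, each positive root lies in exactly one $\Phi(w_j^\vee)$, hence in exactly two of the $\Phi(w_i)$; therefore $\sum_{i=1}^{3}|\Phi(w_i)\cap\Phi(y^{-1})|=2\ell(y)$, and one also has $\sum_{i=1}^{3}\ell(w_i)=2\ell(w_0)$.

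Next, suppose $x\neq e$ satisfies $w_i x\leq w_i$ for each $i$. Since $x\neq e$, necessarily $w_i x\neq w_i$, so the Bruhat inequality is strict and $\ell(w_i x)\leq \ell(w_i)-1$ for every $i$. Combined with the length identity this forces $\ell(x)\geq 3$. In particular, if $x=s_\alpha$ is a simple reflection the argument is immediate: the strict descents $w_i s_\alpha<w_i$ for every $i$ mean $\alpha\in\Phi(w_i)$ for all $i$, i.e.\ $\alpha\notin\Phi(w_i^\vee)$ for all $i$, contradicting $\alpha\in\Phi^+=\bigsqcup_i\Phi(w_i^\vee)$. The case $\ell(x)=2$ is excluded outright by the length bound.

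To finish for $\ell(x)\geq 3$, I would induct on $\ell(x)$. Pick a right descent $s$ of $x$ and set $x'=xs$, so $\ell(x')=\ell(x)-1$. For each $i$ with $w_i s<w_i$, the Lifting Property of the Bruhat order applied to $v=w_i x$ and $w=w_i$ yields $w_i x'=(w_ix)s\leq w_i$ directly. For the remaining indices (those with $w_i s>w_i$), one would invoke Theorem~\ref{th:combi} applied to the biconvex sets $\Phi_j=\Phi(w_j^\vee)$ and to roots built from $\alpha_s$ together with inversions of $x$, in order to deduce $w_i x'\leq w_i$ as well. Granting this step, the induction hypothesis applied to $x'$ (which still satisfies $w_i x'\leq w_i$ for all $i$) forces $x'=e$, so $x=s$ is a simple reflection—contradicting the simple‑reflection base case.

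The main obstacle is exactly this inductive step at the indices $i$ with $w_is>w_i$: the Lifting Property is silent there, and one must squeeze the fine combinatorial content of \eqref{eq:hypwi} out of Theorem~\ref{th:combi}—carefully exploiting the interaction between the descent root $\alpha_s$, the inversions of $x$, and the tripartite biconvex partition of $\Phi^+$—to propagate the Bruhat inequality through the multiplication by $s$.
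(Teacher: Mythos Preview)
Your proposal contains a genuine gap that you yourself identify: the inductive step at the index $j_0$ with $w_{j_0}s>w_{j_0}$ is not established. Two of the three Bruhat inequalities $w_ix'\le w_i$ follow from the Lifting Property, but for the third you only gesture at Theorem~\ref{th:combi}. That theorem concerns intervals $[\beta;\gamma]$ of roots inside a biconvex tripartition and says nothing about propagating a Bruhat relation $w_{j_0}x\le w_{j_0}$ through right multiplication by $s$. There is no evident way to encode ``$w_{j_0}xs\le w_{j_0}$'' as a statement about such intervals, and you give no indication of which $\beta,\gamma$ you would use or why the conclusion $\Phi_2\cap[\beta;\gamma]=\emptyset$ would yield the desired Bruhat inequality. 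Without this, the induction does not close: you can reduce $\ell(x)$ by one only at the cost of losing one of the three inequalities, and the argument stalls.

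The paper's proof is entirely different and short. It is geometric: Theorem~\ref{th:main} gives $[X_{w_1}]\cdot[X_{w_2}]\cdot[X_{w_3}]=[pt]$, while condition~\eqref{eq:hypwi} says the translates $w_i^{-1}X_{w_i}$ meet transversally at $B/B$. Positivity of intersection then forces
\[
w_1^{-1}X_{w_1}\cap w_2^{-1}X_{w_2}\cap w_3^{-1}X_{w_3}=\{B/B\},
\]
and since $xB/B\in w_i^{-1}X_{w_i}$ is equivalent to $w_ix\le w_i$, the corollary follows. Thus the corollary is genuinely a consequence of the main theorem; your attempt to bypass Theorem~\ref{th:main} via pure Bruhat combinatorics would, if it worked, be an independent result of interest---but as written it does not.
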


\subsection{The number of descents}

For $w\in W$, denote by  $\ell(w)$ the cardinality of $\Phi(w)$; it is
the length of $w$ when $W$ is thought as a Coxeter group.
For $w\in W$, we consider the set of left descents:
$$
D(w)=\{\alpha\in \Delta\,:\,\ell(s_\alpha u)<\ell(u)\}
$$
and denote by $d(w)$ the cardinality of $D(w)$. 

\begin{coro}\label{cor:D}
  Let $w_1,w_2$ and $w_3$ in $W$ satisfying \eqref{eq:hypwi}, then
$$
d(w_1)+d(w_2)+d(w_3)=2\rk(G)
\quad{\rm and}\quad
d(w_1^\vee)+d(w_2^\vee)+d(w_3^\vee)=\rk(G).
$$
\end{coro}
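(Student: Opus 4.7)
The plan is to reduce the corollary to a self-duality identity $d(w)+d(w^\vee)=\rk(G)$ together with the immediate counting consequence of \eqref{eq:hypwi}.

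First I would establish the identity $d(w)+d(w^\vee)=\rk(G)$ for every $w\in W$. Let $\sigma\colon\Delta\to\Delta$ be the opposition involution, characterised by $w_0\alpha=-\sigma(\alpha)$ for $\alpha\in\Delta$. Since $(w^\vee)^{-1}=w^{-1}w_0$, for $\alpha\in\Delta$ we compute
\[
(w^\vee)^{-1}\alpha = w^{-1}w_0\alpha = -w^{-1}\sigma(\alpha),
\]
which lies in $\Phi^-$ exactly when $w^{-1}\sigma(\alpha)\in\Phi^+$, i.e. when $\sigma(\alpha)\notin D(w)$. Hence $\alpha\mapsto\sigma(\alpha)$ realises a bijection $D(w^\vee)\longleftrightarrow\Delta\setminus D(w)$, yielding $d(w^\vee)=\rk(G)-d(w)$.

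Summing over $i=1,2,3$ gives $\sum_i d(w_i)+\sum_i d(w_i^\vee)=3\rk(G)$, so the two displayed equalities in the Corollary are equivalent; it suffices to prove either one.

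Next I would exploit \eqref{eq:hypwi} directly. Intersecting the partition $\Phi^+=\Phi(w_1^\vee)\sqcup\Phi(w_2^\vee)\sqcup\Phi(w_3^\vee)$ with $\Delta\subseteq\Phi^+$ produces
\[
\Delta=\bigsqcup_{i=1}^{3}\bigl(\Delta\cap\Phi(w_i^\vee)\bigr),
\]
so $\sum_i|\Delta\cap\Phi(w_i^\vee)|=|\Delta|=\rk(G)$. Equivalently, using $\Phi(w_i^\vee)=\Phi^+\setminus\Phi(w_i)$, one obtains $\sum_i|\Delta\cap\Phi(w_i)|=2\rk(G)$; this is the sum of right-descent cardinalities of the $w_i$.

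The remaining, technical, step is to transfer this right-descent count to the left-descent count $\sum_i d(w_i)$. The pointwise identity $|\Delta\cap\Phi(w_i)|=d(w_i)$ can fail, but I expect the \emph{sum} identity $\sum_i d(w_i)=\sum_i|\Delta\cap\Phi(w_i)|=2\rk(G)$ to follow from the symmetric structure of \eqref{eq:hypwi}. The main obstacle is precisely this matching: the natural candidates — taking inverses $w_i^{-1}$ or conjugation by $w_0$ — do not preserve \eqref{eq:hypwi} verbatim, but the opposition-invariance $(w_1^o,w_2^o,w_3^o)$ with $w^o=w_0ww_0$ does, and combining this with the $\sigma$-bijection above should force the sum of left-descent discrepancies $|L(w_i)|-|R(w_i)|$ to cancel across the triple. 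With that, $\sum_i d(w_i)=2\rk(G)$, and the first equality of the Corollary follows; the second then follows from Step~1.
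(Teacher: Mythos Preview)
Your Steps~1 and~2 are correct: the identity $d(w)+d(w^\vee)=\rk(G)$ holds for every $w\in W$ by the $\sigma$-bijection you describe, and it does reduce the two displayed equalities to one another. Your Step~3 is also correct, but as you note it only yields the \emph{right}-descent identity $\sum_i |\Delta\cap\Phi(w_i)|=2\rk(G)$, not the left-descent statement $\sum_i d(w_i)=2\rk(G)$ that the Corollary asserts.

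The gap is in Step~4, and the fixes you propose do not close it. The triple $(w_1^{-1},w_2^{-1},w_3^{-1})$ generally does \emph{not} satisfy \eqref{eq:hypwi}: already in $A_2$ with $(w_1,w_2,w_3)=(s_1s_2,\,s_1,\,w_0)$ one checks that the $\Phi((w_i^{-1})^\vee)$ neither cover $\Phi^+$ nor are disjoint. As for the conjugation $w\mapsto w^o=w_0ww_0$, a direct calculation gives $D(w^o)=\sigma(D(w))$ and likewise $\Delta\cap\Phi(w^o)=\sigma(\Delta\cap\Phi(w))$, so both the left- and the right-descent counts are preserved separately; this symmetry cannot produce a relation between them. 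In short, there is no evident combinatorial mechanism forcing the individual discrepancies $d(w_i)-|\Delta\cap\Phi(w_i)|$ to cancel across the triple, and your argument does not supply one.

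The paper's proof takes a completely different, geometric route, and it genuinely uses Theorem~\ref{th:main}. One considers the incidence variety $Y\to X=(G/B)^3$ and the boundary divisors $D_{(w_1',w_2',w_3')}$ indexed by $(w_1',w_2',w_3')\leq^1(w_1,w_2,w_3)$. By Proposition~\ref{prop:eta bir} (which is Theorem~\ref{th:main}) the map $\eta$ is birational, so by Proposition~\ref{prop:Zariskidivisor} the non-contracted boundary divisors are in bijection with the codimension-one components of $X\setminus\Omega$; the Picard-group computation of Step~3 in Proposition~\ref{prop:divPoincare} shows there are exactly $2\rk(G)$ of the latter. On the other hand, Lemma~\ref{lem:diffetax'} and Proposition~\ref{prop:Dcontract} identify the non-contracted divisors with those coming from weak Bruhat covers $(w_1',w_2',w_3')\leq_L^1(w_1,w_2,w_3)$, and these are counted precisely by $\sum_i d(w_i)$, since $v\leq_L^1 w$ means $v=s_\alpha w$ with $\alpha\in D(w)$. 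Equating the two counts gives $\sum_i d(w_i)=2\rk(G)$. The appearance of left descents here is not an accident: it is built into the weak Bruhat order and hence into the geometry of the Schubert stratification, whereas your partition argument only sees the right-descent data contained in the inversion sets $\Phi(w_i)$.
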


Based on some computations with a computer we ask the following question: under the assumptions of Corollary~\ref{cor:D}, do we have
$$
d(w_1^\vee)+d(w_2^\vee)  = d(w_1w_2^{-1}) {\rm\ or\ } d(w_2w_1^{-1})?
$$
This have been checked for any root system of rank at most 5 (see the
source code on
\cite{mapage}).

\subsection{\texorpdfstring{Using a Belkale-Kumar expression of $c_{uv}^w$}{Using a Belkale-Kumar expression of cuvw}}

Let $B^-$ be the opposite Borel subgroup of $B$, so that $B\cap B^-=T$.
Let $U$ (also denoted by  $U^+$) and $U^-$  be respectively the unipotent radical of $B$ and $B^-$. 
In \cite[Theorem 43]{BK} Belkale and Kumar give an isomorphism of graded rings:
$$
\phi\colon \left(\Ho^{*}(G/B,\CC),\bkprod\right) \cong \left[\Ho^{*}(\lieu^{+})\otimes \Ho^{*}(\lieu^{-})\right]^{\liet},
$$
where $\lieu^{\pm}= \Lie(U^{\pm})$, $\liet= \Lie(T)$ and
$\Ho^{*}(\lieu^{\pm})$ denotes the Lie algebra cohomology of the nilpotent algebras $\lieu^{\pm}$.
They derive in \cite[Corollary 44-({\em ii})]{BK} an expression for
the coefficients $\tilde c_{uv}^w$. Using it, we get:
\begin{coro}
  If $\Phi(w)=\Phi(w_1)\sqcup \Phi(w_2)$, then
  $$
  \prod_{\alpha\in \Phi(w^{-1})} \langle \rho,\alpha\rangle =
  \left(\prod_{\alpha\in \Phi(w_1^{-1})} \langle \rho,\alpha\rangle
  \right) \left(\prod_{\alpha\in \Phi(w_2^{-1})} \langle
    \rho,\alpha\rangle \right),
  $$
  where $\rho$ is one-half the sum of the positive roots and $\langle\cdot,\cdot\rangle$ the Killing form. 
\end{coro}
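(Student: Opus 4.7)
The plan is to combine Theorem~\ref{th:main} with the explicit formula for $\tilde c_{uv}^w$ provided in \cite[Corollary~44-({\em ii})]{BK}.

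First, I place myself in the hypotheses of Theorem~\ref{th:main} via Poincaré duality. Setting $u:=w_1^\vee$ and $v:=w_2^\vee$, the identities $\Phi(w_i^\vee)=\Phi^+\setminus\Phi(w_i)$ together with de Morgan's law in $\Phi^+$ turn the assumption $\Phi(w)=\Phi(w_1)\sqcup\Phi(w_2)$ into
$$\Phi(u)\cap\Phi(v)=\Phi(w^\vee)\quad\text{and}\quad\Phi(u)\cup\Phi(v)=\Phi^+.$$
Applying Theorem~\ref{th:main} to the triple $(u,v,w^\vee)$ yields $c_{uv}^{w^\vee}=1$, and hence $\tilde c_{uv}^{w^\vee}=1$ by \eqref{eq:defctilde2}.

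Next, I substitute $\tilde c_{uv}^{w^\vee}=1$ into the closed-form expression of \cite[Corollary~44-({\em ii})]{BK}, which writes each Belkale--Kumar structure constant as an explicit product of pairings $\langle\rho,\alpha\rangle$; this formula comes from transporting the isomorphism $\phi$ of \cite[Theorem~43]{BK} through Kostant's $T$-eigenbasis of $\Ho^*(\lieu^\pm)$, in which the basis vector attached to $x\in W$ carries a normalization built out of $\{\langle\rho,\alpha\rangle:\alpha\in\Phi(x^{-1})\}$. Equating the formula to $1$ produces a multiplicative identity among products of $\rho$-pairings indexed by subsets attached to $u^{-1}$, $v^{-1}$, and $(w^\vee)^{-1}$.

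The final step is a combinatorial translation from $u^{-1},v^{-1},(w^\vee)^{-1}$ back to $w_1^{-1},w_2^{-1},w^{-1}$. The key ingredient is the identity
$$\prod_{\alpha\in\Phi(xw_0)}\langle\rho,\alpha\rangle=\prod_{\alpha\in\Phi^+\setminus\Phi(x)}\langle\rho,\alpha\rangle,$$
which follows from $\Phi(xw_0)=-w_0\bigl(\Phi^+\setminus\Phi(x)\bigr)$ and the symmetry $\langle\rho,-w_0\beta\rangle=\langle\rho,\beta\rangle$ (a consequence of $w_0\rho=-\rho$ and the fact that $-w_0$ is an involution of $\Phi^+$). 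Applied successively with $x=w^{-1},w_1^{-1},w_2^{-1}$, and using $u^{-1}=w_1^{-1}w_0$, $v^{-1}=w_2^{-1}w_0$, $(w^\vee)^{-1}=w^{-1}w_0$, this transforms the identity from Step 2 into exactly the statement of the corollary. The main technical point is precisely this bookkeeping: one must check that no extraneous overall factor of $\prod_{\alpha\in\Phi^+}\langle\rho,\alpha\rangle$ is introduced by the $w_0$-dualities, which amounts to verifying that the ``complements in $\Phi^+$'' appear symmetrically in the numerator and denominator of the Belkale--Kumar ratio before the $w_0$-translation.
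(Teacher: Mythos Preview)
Your approach is correct and coincides with the paper's: the corollary is stated immediately after citing \cite[Corollary~44-({\em ii})]{BK} with the phrase ``Using it, we get'', so the intended proof is exactly to combine Theorem~\ref{th:main} with that explicit formula, which is what you do. The paper gives no further details, so your Poincar\'e-duality reformulation (passing from $\Phi(w)=\Phi(w_1)\sqcup\Phi(w_2)$ to the hypotheses of Theorem~\ref{th:main} for $(w_1^\vee,w_2^\vee,w^\vee)$) and the $-w_0$ bookkeeping on inversion sets are precisely the steps one has to supply; your identity $\Phi(xw_0)=-w_0(\Phi^+\setminus\Phi(x))$ together with $w_0\rho=-\rho$ is the right tool for this translation.
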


\subsection{Minimal regular faces of the eigencone}

Let $X(T)^+$ (resp. $X(T)^{++}$) denote the set of dominant
(resp. strictly dominant) characters of $T$ (relatively
to $B$). For $\lambda\in X(T)^+$, we denote by $V(\lambda)$ the
irreducible $G$-module of highest weight $\lambda$. 
For a $G$-module $V$, we denote by $V^G$ the set of $G$-invariant vectors.
Set
$$
\LR(G)=\{(\lambda_1,\lambda_2,\lambda_3)\in (X(T)^+)^3\;:\;
(V(\lambda_1)\otimes V(\lambda_2)\otimes V(\lambda_3))^G\neq\{0\}\ \}.
$$
This set is known to be a finitely generated semigroup (see
e.g. \cite{Kumar:surveyEMS}).
The convex cone $\cLR(G)$ generated by $\LR(G)$ in
$(X(T)\otimes\QQ)^3$ is closed and polyhedral. A face of $\cLR(G)$ is
said to be regular if it intersects $(X(T)^{++})^3$.
By \cite{GITEigen}, the regular faces are controlled by the Belkale-Kumar
product on $H^*(G/P,\ZZ)$ for various standard parabolic subgroups $P$ of
$G$.

\begin{coro}\label{cor:regface}
  Assume that $G$ is semi-simple and simply connected. 
  Let $w_1$, $w_2$ and $w_3$ in $W$ satisfying
  Condition~\eqref{eq:hypwi}. Then
  $${\mathcal F}_{(w_1,w_2,w_3)}=\{(\lambda_1,\lambda_2,\lambda_3)\in (X(T)^+)^3\;:\;w_1\inv
  \lambda_1+w_2\inv\lambda_2+w_2\inv\lambda_3=0\}$$
  is the set of points in  $\LR(G)$ that belong to a regular face of
  $\cLR(G)$.
  As a semigroup, ${\mathcal F}_{(w_1,w_2,w_3)}$ is freely generated
 by $2\rk(G)$ elements.
 Moreover, any codimension $\rk(G)$
  regular face is obtained in such a way.
\end{coro}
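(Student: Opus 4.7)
The plan is to combine Theorem~\ref{th:main} with the general description of regular faces of the eigencone by Ressayre \cite{GITEigen}, and then establish the free-generation assertion by a simplicial parametrization that relies on Corollary~\ref{cor:D}.

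\emph{Identification of the faces.} By \cite{GITEigen}, the regular faces of $\cLR(G)$ of codimension $\rk(G)$ are in bijection with triples $(w_1,w_2,w_3)\in W^3$ such that the Belkale--Kumar triple product $[X_{w_1}]\bkprod [X_{w_2}]\bkprod [X_{w_3}]$ in $\Ho^*(G/B,\ZZ)$ is a nonzero multiple of the class of a point, and the corresponding face is cut out by the linear relation $w_1\inv\lambda_1+w_2\inv\lambda_2+w_3\inv\lambda_3=0$. Combining formula~\eqref{eq:defctilde2} with the associativity and Poincar\'e duality of $\bkprod$, this nonvanishing condition is equivalent to \eqref{eq:hypwi}. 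By Theorem~\ref{th:main}, the structure constant is then exactly $1$, so every such triple does produce a codimension-$\rk(G)$ regular face, namely $\mathcal{F}_{(w_1,w_2,w_3)}$, and every codimension-$\rk(G)$ regular face is obtained in this way.

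\emph{Free generation.} Write $\mathcal{F}=\mathcal{F}_{(w_1,w_2,w_3)}$. By Corollary~\ref{cor:D}, the index set $I=\{(i,\alpha):1\leq i\leq 3,\ \alpha\in D(w_i)\}$ has cardinality $2\rk(G)$, and its complement has cardinality $\rk(G)$. The strategy is to show that the coordinate map
$$
C\colon \mathcal{F}\longrightarrow \NN^{2\rk(G)},\qquad (\lambda_1,\lambda_2,\lambda_3)\longmapsto \bigl(\langle \lambda_i,\alpha^\vee\rangle\bigr)_{(i,\alpha)\in I}
$$
is an isomorphism of semigroups; this directly exhibits $\mathcal{F}$ as a free commutative monoid on $2\rk(G)$ generators, identified with the extremal rays dual to the facets $\{\langle \lambda_i,\alpha^\vee\rangle =0\}_{(i,\alpha)\in I}$. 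Injectivity of $C$ reduces to showing that the $\rk(G)$ vectors $\{w_i\inv\omega_\alpha:\alpha\in\Delta\setminus D(w_i)\}$ form a basis of $X(T)\otimes\QQ$: indeed, if $C(\lambda)=C(\mu)$ then $\lambda_i-\mu_i$ lies in the $\QQ$-span of $\{\omega_\alpha:\alpha\notin D(w_i)\}$, and applying the face equation to the difference produces a linear dependence among these vectors. Surjectivity of $C$ requires that, for every prescribed $(c_{i,\alpha})\in \NN^{2\rk(G)}$, the unique $\QQ$-solution of the face equation with the prescribed descent coordinates be a nonnegative integer triple in $(X(T)^+)^3$.

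\emph{Main obstacle.} The heart of the proof lies in the two combinatorial claims above. The linear independence of $\{w_i\inv\omega_\alpha:\alpha\in\Delta\setminus D(w_i)\}$ should follow from the biconvexity of each $\Phi(w_i)$ (via Lemma~\ref{lem:biconvex plus}) combined with the disjoint decomposition \eqref{eq:hypwi}, by reasoning analogous to that underlying Corollary~\ref{cor:Bruhatorder}. The integrality part of surjectivity follows from the saturation of $\LR(G)$ (Kapovich--Millson). The genuinely delicate point, and the main obstacle, is the nonnegativity of the reconstructed coordinates: I expect this to require either an explicit construction of the $2\rk(G)$ extremal rays indexed by $I$ together with a decomposition argument, or an inductive argument in the spirit of the reductions to small ranks used in Section~\ref{sec:combi}.
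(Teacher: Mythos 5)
Your first paragraph follows the same route as the paper (the identification of the regular faces of codimension $\rk(G)$ is indeed a direct application of \cite{GITEigen} combined with Theorem~\ref{th:main}), but it silently skips one point that the corollary actually asserts: that \emph{every} integral point of the face $\mathcal F_{(w_1,w_2,w_3)}$ lies in the semigroup $\LR(G)$, not merely in the cone $\cLR(G)$. The paper gets this from the PRV conjecture (Kumar, Mathieu). Your later appeal to ``saturation of $\LR(G)$ (Kapovich--Millson)'' cannot replace it: $\LR(G)$ is not saturated outside type $A$ (Kapovich--Millson only produce saturation factors), so this step of your argument is wrong as stated. The PRV input is what makes the specific linear form $w_1\inv\lambda_1+w_2\inv\lambda_2+w_3\inv\lambda_3=0$ force membership in $\LR(G)$.

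The free-generation part is a genuine gap. You propose a combinatorial parametrization by the $2\rk(G)$ descent coordinates $\langle\lambda_i,\alpha^\vee\rangle$, $(i,\alpha)\in I$, but you prove neither the injectivity (the linear independence of $\{w_i\inv\omega_\alpha:\alpha\notin D(w_i)\}$ is only ``expected to follow''), nor the integrality, nor the nonnegativity of the reconstructed solution --- and you explicitly flag the last of these as the main unresolved obstacle. The paper's argument is entirely different and geometric: the generators are the weight triples $(\lambda_1^i,\lambda_2^i,\lambda_3^i)$ of the line bundles $\Oc(E_i)$ attached to the $2\rk(G)$ codimension-one components $E_i$ of $X\setminus\Omega$, where $\Omega=\eta(G\times_B C^+)$. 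Each $E_i$ carries a canonical $G$-invariant section $\sigma_i$, and evaluating at the $T$-fixed point $[e:x_0]$ shows these triples lie on the face. Conversely, for any $\Li=\Li(\lambda_1,\lambda_2,\lambda_3)$ on the face, the unique (up to scalar) invariant section does not vanish on $\Omega$, so its divisor is $\sum n_iE_i$ with $n_i\geq 0$, giving the decomposition; freeness then follows from the injectivity of $\oplus_i\ZZ E_i\to\Pic(X)$ established via the exact sequence \eqref{eq:Picexact}. This simultaneously produces the generators, proves surjectivity of the decomposition, and proves uniqueness --- precisely the three points your outline leaves open. If you want to pursue your combinatorial route, the explicit description of the rays in \cite[Theorem~8]{BK:rays} (mentioned in the remark after the corollary) is the natural substitute, but as written your proposal does not constitute a proof.
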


\begin{remarks}
  \begin{enumerate}
  \item A significant part of Corollary~\ref{cor:regface} (which is
    even equivalent to Theorem~\ref{th:main}) is that there exists
    regular weights $\lambda_1, \lambda_2$ and $\lambda_3$ in $X(T)^{++}$
    such that $w_1\inv \lambda_1+w_2\inv\lambda_2+w_2\inv\lambda_3=0$.
\item The $2\rk(G)$ generators of the semigroup ${\mathcal
    F}_{(w_1,w_2,w_3)}$ are described geometrically in the proof of
  the corollary (see Section~\ref{sec:pf coro}) as the line bundles
  ${\mathcal O} (D_i)$ associated to some explicit divisors $D_i$.
  The triple of weights $(\lambda_1,\lambda_2,\lambda_3)$
  corresponding to 
  ${\mathcal O}(D_i)$ can be derived from \cite[Theorem~8]{BK:rays}.
  \end{enumerate}

\end{remarks}

\subsection{Cohomological components of tensor products}

For $\lambda\in X(T)$, we denote by $\Li(\lambda)$ the $G$-linearized line bundle on $G/B$ such 
that $B$ acts on the fiber over $\uo$ by the character $-\lambda$.
Recall that $\uo$ denotes the base point $B/B$ of $G/B$.
If $\lambda$ is dominant, the Borel-Weil theorem asserts that the space
of sections ${\rm H}^0(G/B,\Li(\lambda))$ is
isomorphic to $V(\lambda)^*$ as a representation of  $G$.
We also set $$\lambda^*=-w_0\lambda.$$
 The points $(\lambda_1,\,\lambda_2, \lambda_1^* + \lambda_2^*)$ (for
 $\lambda_1,\lambda_2\in X(T)^+$) of $\LR(G)$  have the following 
geometric property:
the morphism 
\begin{equation}
  \label{eq:cupzero}
   {\rm H}^0(G/B,\Li(\lambda_1))\otimes  {\rm H}^0(G/B,\Li(\lambda_2))\longto  
{\rm H}^0(G/B,\Li(\lambda_1+\lambda_2)),
\end{equation}
given by the product of sections is nonzero.

Following Dimitrov-Roth (see~\cite{DR:prv1,DR:prv2}), 
we introduce a natural generalization of 
these points of $\LR(G)$ coming from the Borel-Weil-Bott theorem.
For $w\in W$ and $\lambda\in X(T)$, set:
\begin{equation}
  \label{eq:Aaff}
  w\cdot\lambda=w(\lambda+\rho)-\rho.
\end{equation}
The Borel-Weil-Bott theorem asserts that, for any dominant weight $\lambda$ and
any $w\in W$, the $G$-module ${\rm H}^{\ell(w)}(G/B,\Li(w\cdot\lambda))$ is
isomorphic to $V(\lambda)^*$.
Let $(\lambda_1,\,\lambda_2,\,\lambda_3)$ be a triple of dominant weights.
We say that $(\lambda_1,\,\lambda_2,\,\lambda_3^*)$ is a 
{\it cohomological point} of $\LR(G)$ if the cup product:
\begin{equation}
  \label{eq:cup}
   {\rm H}^{\ell(w_1)}(G/B,\Li(w_1\cdot\lambda_1))\otimes  
{\rm H}^{\ell(w_2)}(G/B,\Li({w_2\cdot\lambda_2}))\longto  
{\rm H}^{\ell(w_3^\vee)}(G/B,\Li({w_3^\vee\cdot\lambda_3}))
\end{equation}
is nonzero for some  $w_1,w_2,w_3\in W$. This implies in particular
that   $\ell(w_3^\vee)=\ell(w_1)+\ell(w_2)$ and
$w_1\cdot\lambda_1+w_2\cdot\lambda_2=w_3^\vee\cdot\lambda_3$.

\begin{theo}[Dimitrov-Roth]
  \label{th:DR}
  Let $w_1$, $w_2$, $w_3$ in $W$ and
  $(\mu_1,\mu_2,\mu_3)\in (X(T)^+)^3$ such that
  \begin{enumerate}
  \item $\ell(w_3)=\ell(w_1)+\ell(w_2)$;
  \item $\mu_3=\mu_1+\mu_2$;
    \item $w_i\cdot\mu_i$ is dominant for $i=1,2,3$.
  \end{enumerate}
  Then the  cup product map  
\begin{eqnarray}
  \label{eq:cup2}
   {\rm H}^{\ell(w_1)}(G/B,\Li(\mu_1))\otimes  
{\rm H}^{\ell(w_2)}(G/B,\Li(\mu_2))\longto  
{\rm H}^{\ell(w_3)}(G/B,\Li(\mu_3)),
  \end{eqnarray}
  is nonzero if and only if $\Phi(w_3)=\Phi(w_1)\sqcup\Phi(w_2)$.
\end{theo}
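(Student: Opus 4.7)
The plan is to translate the question about the cup product \eqref{eq:cup2} into a Schubert-calculus computation via the Borel-Weil-Bott theorem. Setting $\lambda_i := w_i\cdot\mu_i$, dominant by hypothesis (3), Borel-Weil-Bott provides $G$-equivariant identifications $\Ho^{\ell(w_i)}(G/B,\Li(\mu_i))\cong V(\lambda_i)^{*}$, so that the cup product becomes a $G$-equivariant map $V(\lambda_1)^{*}\otimes V(\lambda_2)^{*}\to V(\lambda_3)^{*}$, equivalently a specific element of $\Hom_G(V(\lambda_3),V(\lambda_1)\otimes V(\lambda_2))$. The theorem characterizes exactly when this particular element is nonzero.

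I would realize each class concretely via the Cousin resolution of $\Li(\mu_i)$ adapted to the Bruhat stratification of $G/B$, following \cite{DR:prv1,DR:prv2}. A generator of $\Ho^{\ell(w_i)}(G/B,\Li(\mu_i))$ is then represented by an explicit local cocycle whose support is controlled by the Schubert variety attached to $w_i$ and whose weight structure is governed by the inversions $\Phi(w_i)$. The cup product of such cocycles can be computed locally on the Bruhat cells, and by the length hypothesis $\ell(w_3)=\ell(w_1)+\ell(w_2)$, the computation concentrates in top degree, where it reduces to a Schubert intersection controlled by the Belkale-Kumar product on $\Ho^{*}(G/B,\CC)$.

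The forward direction then follows from tracking weights: the nonvanishing of the local cup product forces the contributions from $\Phi(w_1)$ and $\Phi(w_2)$ to partition $\Phi(w_3)$, which is exactly $\Phi(w_3)=\Phi(w_1)\sqcup\Phi(w_2)$. For the reverse direction, Theorem~\ref{th:main} is decisive: under the inversion condition, the Belkale-Kumar structure constant equals $1$, and this provides an explicit nonzero realization of the cup product. The main obstacle is the precise matching between the Cousin cocycle cup product and the Belkale-Kumar coefficient: one must control how the twists by $\Li(\mu_i)$ interact with the Bruhat stratification and with the tensor product of Cousin complexes. This is the technical heart of the Dimitrov-Roth argument, and ultimately relies on the invariant-theoretic description of the Belkale-Kumar product via the Kostant isomorphism $\left(\Ho^{*}(G/B,\CC),\bkprod\right)\cong\left[\Ho^{*}(\lieu^{+})\otimes\Ho^{*}(\lieu^{-})\right]^{\liet}$, which converts the cup product compatibility into an algebraic statement at the level of Lie algebra cohomology.
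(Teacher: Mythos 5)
The first thing to note is that the paper does not prove this statement: it is quoted from Dimitrov--Roth and attributed to \cite{DR:prv1,DR:prv2}, so there is no internal proof to compare yours against. Your proposal therefore has to stand on its own, and as written it does not. The reduction via Borel--Weil--Bott to the question of whether a specific element of $\Hom_G(V(\lambda_3),V(\lambda_1)\otimes V(\lambda_2))$ is nonzero is fine, but everything after that is a description of what a proof would have to do rather than a proof. The decisive step --- that the cup product of Cousin cocycles for the three twists $\Li(\mu_i)$ ``reduces to a Schubert intersection controlled by the Belkale--Kumar product'' --- is essentially the content of the theorem, and you explicitly acknowledge that it is not established (``the main obstacle\dots the technical heart of the Dimitrov--Roth argument''). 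Both directions of the equivalence rest on this unproven identification, so the proposal defers the entire difficulty to the reference it is supposed to reprove.

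Two further concrete problems. For the forward direction, ``tracking weights forces the contributions from $\Phi(w_1)$ and $\Phi(w_2)$ to partition $\Phi(w_3)$'' is not an argument: the cup product of two classes in higher cohomology of nontrivial line bundles is not a weight vector whose weight you can simply read off, and the vanishing you need (if the inversion sets do not partition $\Phi(w_3)$, the map \eqref{eq:cup2} is zero) requires the precise Kostant-type analysis that Dimitrov--Roth carry out. For the reverse direction, invoking Theorem~\ref{th:main} is a non sequitur as stated: the fact that a structure constant of $\Ho^*(G/B,\ZZ)$ equals $1$ does not by itself produce a nonzero map between cohomology groups of the line bundles $\Li(\mu_i)$; building that bridge is again the substance of the theorem, not a corollary of it. Note also the logical architecture of the paper: Theorem~\ref{th:DR} is used as an external input, together with Theorem~\ref{th:main} and Corollary~\ref{cor:regface}, to deduce Corollary~\ref{cor:cohomcomp}; Dimitrov--Roth's proof is independent of, and prior to, Theorem~\ref{th:main}, so a correct proof of Theorem~\ref{th:DR} should not need it.
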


Under the assumption of Theorem~\ref{th:DR} and $\Phi(w_3)=\Phi(w_1)\sqcup\Phi(w_2)$, set
$\lambda_i=w_i\cdot\mu_i$ for $i=1,2,3$.
By the Borel-Weil-Bott theorem, Theorem~\ref{th:DR} gives a surjective
map
$$
V(\lambda_1)^*\otimes V(\lambda_2)^*\longto V(\lambda_3)^*.
$$
In particular the point $(\lambda_1,\lambda_2,\lambda_3^*)$ belongs
to $\LR(G)$.

On the other hand, the condition $\mu_3=\mu_1+\mu_2$ is equivalent to
$$
w_1\inv\cdot\lambda_1+w_2\inv\cdot\lambda_2=w_3\inv\cdot\lambda_3,
$$
which is also equivalent to
\begin{equation}
w_1\inv\lambda_1+w_2\inv\lambda_2=w_3\inv\lambda_3.\label{eq:32}
\end{equation}
Indeed, using that  $\Phi(w_3)=\Phi(w_1)\sqcup\Phi(w_2)$ and $\rho=\frac 1
2\sum_{\beta\in\Phi^+}\beta$, one easily deduces  
$$
\rho=w_1\inv\rho+w_2\inv\rho-w_3\inv\rho.$$
 In particular, from Theorem~\ref{th:main}, equation~\eqref{eq:32} and
Corollary~\ref{cor:regface} we deduce the following Corollary.

\begin{coro}
  \label{cor:cohomcomp}
  The point $(\lambda_1,\lambda_2,\lambda_3^*)\in X(T)^3$ is a cohomological point of $\LR(G)$ if and only if it belongs to a regular face of
  codimension $\rk(G)$ of $\cLR(G)$.
\end{coro}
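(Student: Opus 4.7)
The plan is to combine Theorem~\ref{th:DR}, equation~\eqref{eq:32}, Corollary~\ref{cor:regface}, and Theorem~\ref{th:main}, converting between DR-style triples (those with $\Phi(w_3)=\Phi(w_1)\sqcup\Phi(w_2)$, appearing in Theorem~\ref{th:DR}) and BK triples of~\eqref{eq:hypwi} (appearing in Corollary~\ref{cor:regface}) through the Poincar\'e duality $w\mapsto w^\vee$ applied to the three indices.

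For the direct implication, I would start from a cohomological point $(\lambda_1,\lambda_2,\lambda_3^*)$ witnessed by $w_1,w_2,w_3\in W$ making the cup product~\eqref{eq:cup} non-zero. Applying Theorem~\ref{th:DR} with its distinguished third index equal to $w_3^\vee$ gives $\Phi(w_3^\vee)=\Phi(w_1)\sqcup\Phi(w_2)$, and \eqref{eq:32} yields the weight identity $w_1^{-1}\lambda_1+w_2^{-1}\lambda_2=(w_3^\vee)^{-1}\lambda_3$. Combining the DR partition with $\Phi(w_3)\sqcup\Phi(w_3^\vee)=\Phi^+$ produces the refined partition $\Phi^+=\Phi(w_1)\sqcup\Phi(w_2)\sqcup\Phi(w_3)$, which is exactly the condition that $(\tilde w_1,\tilde w_2,\tilde w_3):=(w_1^\vee,w_2^\vee,w_3^\vee)$ is a BK triple satisfying~\eqref{eq:hypwi}. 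A direct computation using $(w^\vee)^{-1}=w^{-1}w_0$ and $\lambda_3^*=-w_0\lambda_3$ translates~\eqref{eq:32} into the defining linear equation of the codimension-$\rk(G)$ regular face $\mathcal{F}_{(\tilde w_1,\tilde w_2,\tilde w_3)}$ furnished by Corollary~\ref{cor:regface}, so the cohomological point lies in this face.

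For the converse, suppose $(\lambda_1,\lambda_2,\lambda_3^*)$ belongs to a codimension $\rk(G)$ regular face. By Corollary~\ref{cor:regface} this face is $\mathcal{F}_{(\tilde w_1,\tilde w_2,\tilde w_3)}$ for some BK triple satisfying~\eqref{eq:hypwi}. Setting $w_i:=\tilde w_i^\vee$ recovers a DR triple $(w_1,w_2,w_3)$ with $\Phi(w_3^\vee)=\Phi(w_1)\sqcup\Phi(w_2)$, and the defining equation of $\mathcal{F}$ becomes, by the reverse manipulation, the identity~\eqref{eq:32}. Regularity of the face forces dominance of each $\mu_i=w_i\cdot\lambda_i$, so Theorem~\ref{th:DR} applies; the non-vanishing of the associated cup product is then guaranteed by Theorem~\ref{th:main}, which gives $c_{uv}^w=1\neq 0$, and so $(\lambda_1,\lambda_2,\lambda_3^*)$ is a cohomological point. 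The principal subtlety in the argument is the $w_0$- and sign-bookkeeping needed to check that the Poincar\'e dualization of the triple, combined with the dualization $\lambda\mapsto\lambda^*$, carries the Borel--Weil--Bott weight identity of Theorem~\ref{th:DR} precisely to the linear equation defining $\mathcal{F}_{(\tilde w_1,\tilde w_2,\tilde w_3)}$; once this translation is made explicit, both implications reduce to direct applications of the cited results.
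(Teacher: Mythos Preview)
Your proposal is correct and mirrors the paper's own argument, which consists only of the sentence ``from Theorem~\ref{th:main}, equation~\eqref{eq:32} and Corollary~\ref{cor:regface} we deduce the following Corollary.'' One minor correction: in the converse direction the non-vanishing of the cup product follows directly from the ``if'' half of Theorem~\ref{th:DR} once the inversion-set and weight conditions are in place; Theorem~\ref{th:main} enters only through Corollary~\ref{cor:regface}, where it guarantees that the linear subspace in question actually cuts out a \emph{regular} face (and that every codimension-$\rk(G)$ regular face arises this way). Also, dominance of the $\lambda_i$ is not ``forced by regularity of the face''---it is simply part of the hypothesis that the point lies in $(X(T)^+)^3$, which is exactly what is needed for condition~3 of Theorem~\ref{th:DR}.
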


\section{Overview of the proof of Theorem~\ref{th:main}}

For the reader's convenience, we explain in this section the steps of
the demonstration. In order to identify the guiding ideas more
clearly, we will define the various objects precisely later in the article.
Fix $u,v,w \in W$ as in the statement of Theorem~\ref{th:main}.
%\medskip

\renewcommand{\theenumii}{(\roman{enumii})}
\renewcommand{\labelenumii}{\theenumii}
\renewcommand{\theenumi}{\Alph{enumi}.}
\renewcommand{\labelenumi}{\theenumi}

\begin{enumerate}
\item We consider an incidence projective $G$-variety $Y$ endowed with
  an equivariant morphism $\eta\,:\, Y\longto (G/B)^3$.  The first
  observation (see~Proposition~\ref{prop:eta bir}) is
$$
c_{uv}^w=1\iff \eta\mbox{ is birational.}
$$

\item Then, using an argument of topology, we prove that $\eta$ is
birational if and only if
\begin{center}
  the ramification divisor $R_\eta$ of $\eta$ is contracted.
\end{center}
jSee Section \ref{sec:bir} for more details.

\item The next step consists in describing explicitly the irreducible
components of the divisor $R_\eta$. In particular, we parametrize these components by combinatorial objects. 
Fix such a component $D$.
By the previous point, it remains to show that $D$ is contracted.

\item
 By a general fact of complex algebraic geometry, $D$ is contracted if and only if for a general $x\in D$ 
 the Kernel of the tangent map $T_x\eta$ intersects non-trivially the tangent space $T_x D$.

\item 
Recall that $U$ is the unipotent radical of $B$. 
We define an explicit point $x_D$ of $D$. 
By an argument of  $G$-equivariance, 
we observe that it is sufficient to check the infinitesimal  criterium of Step D on points $x \in D$ of the form
 $x=(g,g') \cdot x_D$  $(g,g'\in U)$. 
 Therefore, we need to show that
$$
\forall g,g'\in U, \qquad\Ker(T_{(g,g')x_0}\eta)\cap
T_{(g,g')x_0}D\neq\{0\}.
$$

\item
Next, we define a matrix $M(g,g')$ depending on $g,g' \in U$ (and on the combinatorial data corresponding to $D$) 
such that
$$
\Ker(M(g,g'))=\Ker(T_{(g_1,g_2)x_0}\eta)\cap T_{(g_1,g_2)x_0}D.
$$

\item
Using a grading by weights, we show that many entries of the
matrix $M(g,g')$ are $0$ (see Lemma~\ref{lem:actionU}). Now, two cases occur:

\medskip

\noindent\underline{Case 1}.
The matrix $M(g,g')$ contains a diagonal block with more columns than rows. Thus,
the Kernel of $M(g,g')$ is not trivial, ending the proof in this case.

\medskip

\noindent\underline{Case 2}.
If case 1 does not occur, the argument is more subtle. 
Analysing the shape of the matrix $M(g,g')$, we
define a family $M_1(g,g'), \dots, M_s(g,g')$ of square submatrices such that $M(g,g')$ is
not injective if and only if at least one of the $M_i(g,g')$ is not
invertible.

Using Theorem~\ref{th:combi}, we prove that
the determinant of each $M_i(g,g')$ only depends  on either $g$ or
$g'$.
Then using the fact that Theorem~\ref{th:main} holds in the
trivial case of Poincaré duality, we deduce that at least one of the
$\det(M_i(g,g'))$ is zero, without computing these determinants. This completes the argument.
\end{enumerate}

\renewcommand{\theenumi}{(\roman{enumi})}
\renewcommand{\labelenumi}{\theenumi}
\renewcommand{\theenumii}{(\alph{enumii})}
\renewcommand{\labelenumii}{\theenumii}

\section{The geometric strategy}
\label{sec:strategy}

We now start the proof of Theorem~\ref{th:main}.

\subsection{Incidence variety}
\label{sec:incidence}

Recall that for any $w \in W$, $[X_{w^\vee}]$ is the Poincaré dual of $[X_w].$
Since $\Ho^*(G/B,\CC)$ is graded, if  $c_{uv}^w\neq 0$ then
\begin{equation}
\ell(u)+\ell(v)=\ell(w)+\ell(w_0).\label{eq:length}
\end{equation}
Assuming \eqref{eq:length}, by Kleiman's theorem we have that $c_{uv}^w$ is the
cardinality of the intersection
$$
g_uX_u\cap g_vX_v\cap g_wX_{w^\vee}
$$
for general $(g_u,g_v,g_w)\in G^3$.

For any $(w_1,w_2,w_3) \in W^3$, we associate the incidence variety $Y=Y(w_1,w_2,w_3)$ defined by
\begin{equation}
Y=\{p=(z,g_1\uo,g_2\uo,g_3\uo)\in (G/B)^4\, : \, z\in g_1X_{w_1}\cap g_2X_{w_2}\cap g_3X_{w_3} \},
\label{eq:defYinc}
\end{equation}
endowed with its projections $\pi\,:\,Y \longto G/B$ and
$\eta\,:\,Y\longto (G/B)^3$ mapping $p$ respectively to $z$ and to $(g_1\uo,g_2\uo,g_3\uo)$.
Notice that, as for the incidence variety $Y$, the maps $\pi$ and $\eta$ depend on the choice of the triple of Weyl group elements $(w_1,w_2,w_3)$.
The integer $c_{uv}^w$ is then interpreted as the cardinality of a general fiber
of $\eta$. 
We get

\begin{prop}
\label{prop:eta bir}
Let $u,v,w\in W$ satisfying~\eqref{eq:length}. 
Then $c_{uv}^w=1$ if and only if the map $\eta$ associated to the triple $(w_1,w_2,w_3)=(u,v,w^\vee)$ is birational.
\end{prop}

\subsection{Incidence variety as a fibered product}

Let $(w_1,w_2,w_3) \in W^3$.
Set $X=(G/B)^3$ and
$$x_0=(w_1\inv \uo, w_2\inv \uo,
w_3\inv \uo)\in X.
$$
Note that $G^3$ acts on $X$.
Set
$C^+=B^3 \cdot x_0$, and denote  by $\bar C^+$ the closure of $C^+$ in $X$.
Notice that 
$$
\bar C^+= X_{w_1^{-1}} \times X_{w_2^{-1}} \times X_{w_3^{-1}}.
$$
The group $B$ acts on $G\times \bar C^+$ by the formula
$b \cdot (g,z)=(gb^\inv,bz)$. The quotient of $G\times \bar C^+$ under this
action is a projective variety denoted by $G\times_B\bar C^+$. The
class of $(g,z)$ is denoted by $[g:z]$.
The map
$$
\begin{array}{cccl}
  \phi\;:&G\times_B\bar C^+&\longto&Y\\
  &[g:(z_1,z_2,z_3)]&\longmapsto&(g\uo,gz_1,gz_2,gz_3)
\end{array}
$$
is an isomorphism. We worn the reader that we freely identify these two varieties by means of the isomorphism $\phi$ through the text.

Observe that, modulo $\phi$, $\eta$ identifies with
$[g:(z_1,z_2,z_3)]\longmapsto (gz_1,gz_2,gz_3)$, and $\pi$ with
$[g:(z_1,z_2,z_3)]\longmapsto g\uo$.

\medskip

We now consider $G\times_B  C^+$ as an open subset of $G\times_B\bar
C^+$ and denote by $\eta^\circ$ the restriction of $\eta$ to this
open set.
Then, $G\times_B  C^+$ identifies (once more, via $\phi$) with
$$
Y^\circ=\{(z,g_1\uo,g_2\uo,g_3\uo)\in (G/B)^4\, : \, z\in g_1X_{w_1}^\circ\cap g_2X_{w_2}^\circ\cap g_3X_{w_3}^\circ \},
$$
where,  for an element $w\in W$, we use the notation $X_w^\circ=Bw\uo$.

\subsection{\texorpdfstring{The tangent map of $\eta$}{The tangent map of eta}}

\begin{prop}
\label{prop:Kereta}  
Let $(w_1,w_2,w_3) \in W^3$ and $p=(z,g_1\uo,g_2\uo,g_3\uo)\in Y(w_1,w_2,w_3)$.
The differential $T_p \pi$ of $\pi$ at  $p$ induces an isomorphism of vector spaces
$$
T_p \pi : \ker (T_p \eta) \longto \,  T_z g_1X_{w_1}\cap T_z g_2X_{w_2}\cap T_z g_3X_{w_3}.
$$
\end{prop}

\begin{proof}
Denote $Y(w_1,w_2,w_2)$ by $Y$. 
Notice that $\ker(T_p\eta)= T_p(Y_{\eta(p)})$, where $Y_{\eta(p)}$ is the scheme theoretic fiber of the point $\eta(p)$ along the morphism $\eta.$
The restriction of the morphism $\pi$ to $Y_{\eta(p)}$ induces an isomorphism between $Y_{\eta(p)}$ and the scheme theoretic intersection $g_1X_{w_1}\cap g_2X_{w_2}\cap g_3X_{w_3} \subset G/B$.
We conclude by noticing that 
$$
T_z \bigl( g_1X_{w_1}\cap g_2X_{w_2}\cap g_3X_{w_3} \bigr)=T_z g_1X_{w_1}\cap T_z g_2X_{w_2}\cap T_z g_3X_{w_3}.
$$
\end{proof}

\subsection{About birational maps}
\label{sec:bir}

Let $f\,:\,Y\longto X$ be a dominant morphism between irreducible
varieties of the same dimension. We say that $f$ is {\it
  generically finite}. 
The {\it degree of $f$} is defined to be $\deg(f)=[\CC(Y):\CC(X)]$.
The degree of $f$ is one, if and only if $f$ is birational.
A prime divisor $D$ of $Y$ is said to be {\it contracted} if the closure 
$\overline{f(D)}$ of its image has codimension at least two.
We use the following consequence of the main Zariski theorem
(see~\cite[Chap III, Section 9, Proposition~1]{Mumford:red}).

\begin{prop}\label{prop:Zariskidivisor}
  Assume that $f$ is birational and that $X$ is normal. Let $D$ be a
  prime divisor of $Y$. Assume that $D$ is not contracted. 
  
  Then,  the restriction of $f$ to $D$ is still birational. Moreover, 
  if  $D'$ is a prime divisor of $Y$ which is not contracted and
    such that $\overline{f(D)}=\overline{f(D')}$, then $D=D'$.
\end{prop}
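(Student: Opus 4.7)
The plan is to apply Zariski's main theorem in the form cited from Mumford. Since $f\colon Y\to X$ is birational and $X$ is normal, there exists a closed subset $E\subset X$ of codimension at least two such that $f$ restricts to an isomorphism $V\to U$, where $U=X\setminus E$ and $V=f\inv(U)$. This is the key geometric input, and the rest of the argument is just careful bookkeeping.

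For the first assertion, I would show that $D$ meets $V$ in a dense open subset of $D$. Indeed, if we had $D\subset Y\setminus V=f\inv(E)$, then $f(D)\subset E$, hence $\overline{f(D)}\subset E$ since $E$ is closed. But $\overline{f(D)}$ has codimension one by hypothesis whereas $E$ has codimension at least two in $X$, a contradiction. Therefore $D_0:=D\cap V$ is open and dense in $D$, and since $f|_V$ is an isomorphism, $f|_{D_0}$ is an isomorphism onto its image, which is a locally closed subset of $U$ whose closure in $X$ is precisely $\overline{f(D)}$. This exhibits a dense open subset of $D$ mapping isomorphically to a dense open subset of $\overline{f(D)}$, so $f|_D$ is birational.

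For the uniqueness statement, suppose $D$ and $D'$ both satisfy the hypotheses of the proposition and $\overline{f(D)}=\overline{f(D')}$. By the previous paragraph, $D\cap V$ is dense open in $D$ and equals $(f|_V)\inv(\overline{f(D)}\cap U)$; the same description holds for $D'\cap V$ with $\overline{f(D')}\cap U=\overline{f(D)}\cap U$. Hence $D\cap V=D'\cap V$, and since $D$ and $D'$ are irreducible closed subsets of $Y$ that share this common dense open subset, they coincide.

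The only step that uses any nontrivial input is the codimension-two bound on the indeterminacy locus of $f\inv$, which is precisely where the normality of $X$ enters, via Zariski's main theorem. Everything else is a formal consequence of having an isomorphism away from a subset of codimension at least two, and I do not anticipate any serious obstacle.
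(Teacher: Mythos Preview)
Your argument is correct and is exactly the derivation the paper has in mind: the authors do not spell out a proof but simply invoke Zariski's main theorem in the form of \cite[Chap.~III, \S9, Proposition~1]{Mumford:red}, and your write-up supplies the standard details of that deduction. One small point worth making explicit is why $f|_{V}\colon V\to U$ is an isomorphism (and not merely that $f^{-1}$ is defined on $U$): the section $g=f^{-1}\colon U\to f^{-1}(U)$ is a closed immersion into the irreducible open set $f^{-1}(U)\subset Y$, and its image has full dimension, hence $g(U)=f^{-1}(U)=V$; you may want to add a sentence to this effect.
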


Come back to a generically finite morphism $f\,:\,Y\longto X$.
Assume in addition that $Y$ is normal and $X$ is smooth.
Let $Y^{\rm reg}$ denote the open set of smooth points of $Y$.
The determinant of the tangent map of
$f$ defines a Cartier divisor
$R_f$ in $Y^{\rm reg}$, called the {\it ramification} divisor.
Taking its closure, we get a Weyl divisor on $Y$, still denoted by
$R_f$.
Recall that $Y-Y^{\rm reg}$ has codimension at least 2. 
Let $\Supp(R_f)$ denote the reduced support of $R_f$.

\begin{prop}
  \label{prop:ramdiv}
Let $f\,:\,Y\longto X$ be a generically finite morphism. 
  Assume, in addition,  that
  \begin{enumerate}
  \item $X$ is smooth and simply connected;
    \item $Y$ is normal and projective;
  \end{enumerate}
Then, $f$ is birational if and only if each irreducible component of $\Supp(R_f)$ is contracted.
\end{prop}

\begin{proof}
  If $f$ is birational, the Zariski's main theorem implies that $\Supp(R_f)$ is contracted.

  For the converse, using the Stein factorization \cite[Corollary~11.5]{Hart}, we may
  assume that $f$ is finite. Then $f$ is a covering from $Y\backslash
  R_f$ onto $X\backslash f(\Supp(R_f))$.
  Since each component of $\Supp(R_f)$ is contracted, the
  fundamental groups of $X$ and $X\backslash f(\Supp(R_f))$ coincide,
  and $X\backslash f(\Supp(R_f))$ is simply connected.
  The proposition follows.
\end{proof}

Let now $(w_1,w_2,w_3)\in W^3$ such that $c_{w_1w_2}^{w_3^\vee}\neq 0$. 
Then, the map $\eta\,:\,Y=Y(w_1,w_2,w_3)\longto (G/B)^3$ satisfies the assumptions of 
Proposition~\ref{prop:ramdiv}. We get:

\begin{prop}
  \label{prop:c=1contract}
  The map $\eta$ is birational if and only if $R_\eta$ is contracted.
\end{prop}

\section{\texorpdfstring{First properties of the map $\eta$}{First properties of the map eta}}
\label{sec:etaleopen}

\subsection{Bruhat order}

For later use, we fix some notation on the Bruhat order.
The Bruhat order is generated by the covering relations:
$v\leq^1 w$. 
This condition means that $X_v\subset X_w$ and
$\dim(X_w)=\dim(X_v)+1$. Equivalently,
$$
v\leq^1 w \iff \exists \, \beta\in\Phi^+    : \qquad v=s_\beta w \, \mbox{ and } \, \ell(v)=\ell(w)-1.
$$

We denote by $\leq_L$ the left weak Bruhat order, which can be defined by $v\leq_L w$ if and only if $\Phi(v)\subset
\Phi(w)$.
It is  generated by the covering relations:
$v\leq_L^1 w$.
This condition means that $\Phi(v)\subset \Phi(w)$ and
$\sharp\Phi(w)=\sharp\Phi(v)+1$.
Equivalently, 
$$
v\leq^1_L w \iff \exists \,  \alpha\in\Delta : \qquad v=s_\alpha w \, \mbox{ and } \, \ell(v)=\ell(w)-1.
$$

\subsection{An open subset of the incidence variety}
\label{sec:open subset}

Fix $w_1, w_2$ and $w_3$ in $W$ satisfying \eqref{eq:hypwi}.
Consider the incidence variety $Y=Y(w_1,w_2, w_3)$ and the two
maps $\pi$ and $\eta$ defined in~\eqref{eq:defYinc}.

Set
$$
\Bcal=\{(\beta,i)\,:\, \beta\in \Phi^+,\, i\in\{1,2,3\} \mbox{ and } 
\ell(s_\beta w_i)=\ell(w_i)-1\}.
$$
Let $(\beta,i)\in\Bcal$. We set
$$
\begin{array}{c} 
  w_i'=s_\beta w_i, \qquad w_j'=w_j\quad ( j\neq i),\\
  x_0'=(w_1'\inv \uo , w_2'\inv \uo , w_3'\inv \uo) \in (G/B)^3.
\end{array}
$$
Notice that these elements depend on the pair $(\beta, i)$ rather than on $i$. Moreover, we denote
$$ 
\begin{array}{cl}
  D_{(\beta,i)}&=Y(w_1',w_2',w_3')\\
  &=\{p=(z,g_1\uo,g_2\uo,g_3\uo)\in (G/B)^4\, : \, z \in g_1X_{w_1'}\cap g_2X_{w_2'}\cap g_3X_{w_3'} \},\\
  &=G\times_B (\overline{B^3 \cdot x_0'}),
\end{array}
$$
viewed as a subvariety of $Y(w_1,w_2,w_3)$.
Then
\begin{equation}
    \label{Bruhat divisors}
G\times_B \bar C^+=G\times_B C^+\sqcup
\bigcup_{(\beta,i)\in\Bcal 
} D_{(\beta,i)}.
\end{equation}

\subsection{\texorpdfstring{The differential of $\eta$}{The differential of eta}}

Given $\varphi\in\Phi$, denote by $\lg_\varphi$ the corresponding
weight space in the Lie algebra $\lg$ of $G$. 
For $w\in W$, set 
$$
T_w=\bigoplus_{\varphi\in\Phi(w)}\lg_{-\varphi}.
$$
The
projection $\lg \longto \lg/\lb$ gives an isomorphism between $T_w$
and the tangent space $T_{\uo}w\inv
Bw\uo$ of $w\inv X_w^\circ$ at the point $\uo$. From now on we identify these two spaces.

\begin{lemma}
  \label{lem:Teta=interT}
  Let $(\beta,i)\in\Bcal$ and $g_1, g_2, g_3$ in $B$. Then
  \begin{enumerate}
  \item $\Ker (T_{[e:x_0]}\eta)\simeq T_{w_1}\cap T_{w_2}\cap T_{w_3}=\{0\}$;
    \item $\Ker (T_{[e:(g_1,g_2,g_3)x_0']}\eta_{|D_{(\beta,i)}})\simeq g_1T_{w_1'}\cap
      g_2T_{w_2'}\cap g_3T_{w_3'}$.
     \item $\Ker (T_{[e:x_0']}\eta)=\{0\}$ if $\beta$ is a simple root.
  \end{enumerate}
\end{lemma}

\begin{proof}
The equality $T_{w_1}\cap T_{w_2}\cap T_{w_3}=\{0\}$ is a direct consequence of Assumption~\eqref{eq:hypwi}.
The rest of the two first assertions is direct translation of Proposition~\ref{prop:Kereta}.
For the last assertion, assume without lost of generality that $i=1$. 
Then, Proposition~\ref{prop:Kereta} asserts that 
$\Ker (T_{[e:x_0']}\eta)= T_\uo s_\beta X_{w_1}\cap T_\uo X_{w_2}\cap T_\uo X_{w_3}$. 
But, the fact that $\beta$ is simple implies that $X_{w_1}$ is stable by the minimal parabolic subgroup associated to $\beta$. 
Hence, $s_\beta X_{w_1}=X_{w_1}$.
\end{proof}

Recall that $\eta^\circ$ denotes the restriction of $\eta$ to the open subset $Y^\circ$ of $Y.$
\begin{prop}
  \label{prop:rameta}
  \begin{enumerate}
    \item The map $\eta^\circ$ is smooth.
    In particular, $\Omega=\eta(G\times_B C^+)$ is open in $X$.
    \item If $\eta$ is birational then $\eta^\circ : G \times_B C^+ \longto \Omega$ is an isomorphism.
    \item For any $(\alpha,i)\in \Bcal$ with $\alpha$ simple, the divisor $D_{(\alpha,i)}$ is not contained in $\Supp(R_\eta)$.
  \end{enumerate}
\end{prop}

\begin{proof}
  Since $G\times_B  C^+$ and $X$ are smooth, it remains to prove that
  $T_p\eta$ is invertible for any $p\in G\times_B  C^+$.
  Consider the set $Z$ of points $p$ of $G \times _B  C^+$ such that $T_p\eta$ is not
  invertible. 

  Our assumption on $(w_1,w_2,w_3)$ and Lemma~\ref{lem:Teta=interT} imply that  $T_{[e:x_0]}\eta$ is invertible.
  
  Let $\tau$ be a dominant regular one parameter subgroup of $T$.
  It is well known that for any $z\in C^+$,
  $\lim_{t\to 0}\tau(t)z=x_0$.
  Since $Z$ is closed and stable by the action of $\tau$, this implies
  that $Z\cap C^+=\emptyset$ (here $C^+$ identified to a subvariety of
  $G\times_B C^+$ by the map $x\mapsto [e:x]$).  

  As the map $\eta$ is $G$-equivariant, we deduce that $Z$ is empty.
  The fist assertion follows.

  \medskip
  Recall that  $Y^\circ=  G \times_B C^+$. 
  Fix $q \in \Omega$ and denote by $Y^\circ_q$ its schematic fiber for $\eta^\circ$. 
  Since $\eta^\circ$ is smooth of relative dimension zero and $Y^\circ$ is of finite type, we have that
  $Y^\circ_q$ is a variety of dimension zero, hence affine. 
  Moreover, by flatness of $\eta^\circ$, $\dim \CC[Y^\circ_q]=\deg \eta^\circ =1$. 
  Hence, $Y^\circ_q$ is a single point. The second assertion follows from the Zariski main theorem.

  \medskip
  The last statement follows immediately from the last assertion of Lemma~\ref{lem:Teta=interT}.
\end{proof}

\begin{remark}
  Proposition~\ref{prop:rameta} shows that 
$$
\Supp(R_\eta)\subset\bigcup_{\begin{array}{c}
  (\beta,i)\in\Bcal\\
  \beta\not\in\Delta
\end{array}}
D_{(\beta,i)}.
$$
In the next section we prove that the $D_{(\beta,i)}$ appearing on the right hand side of the previous expression are contracted by $\eta$.
  This proves that the previous inclusion is an equality. 
\end{remark}

\subsection{The case of Poincaré duality}

Let $w\in W$. Keep the notation of the previous section assuming
in addition that $(w_1,w_2,w_3)=(w,w^\vee,w_0)$.
Then, by Poincaré duality, $\eta$ is birational.
We now describe the
behaviour of the divisors on the boundary of $Y^\circ$ using Proposition~\ref{prop:Zariskidivisor}.

\begin{prop}\label{prop:divPoincare}
  Let $(\beta,i)\in \Bcal$.
  Then
  \begin{enumerate}
  \item If $\beta\in\Delta$ then 
    the restriction of $\eta$ to $D_{(\beta,i)}$ is birational.
  Moreover, there are exactly $2\rk(G)$ such divisors.
    \item If $\beta\not \in\Delta$ then 
      $D_{(\beta,i)}$ is contracted.
  \end{enumerate}
\end{prop}

\begin{proof}
  Assume that $\beta=\alpha\in\Delta$. By Proposition \ref{prop:rameta}, $D_{(\alpha,i)}$ is not
  contained in the ramification divisor. Hence,
  Proposition~\ref{prop:Zariskidivisor} implies that the restriction
  of $\eta$ to $D_{(\alpha,i)}$ is birational.

  \medskip
  Observe that the set $\{(\alpha,i)\in\Bcal\,:\,\alpha\in\Delta\}$ is
  equal to
  $$
\{(\alpha,1)\,:\,\alpha\in
D(w)\}\sqcup \{(\alpha,2)\,:\,\alpha\in D(w^\vee\}\sqcup\{(\alpha,3)\,:\,\alpha\in\Delta\},
$$
where we recall that $D(w)$ denotes the set of left descents of $w$. But notice that
$$
\forall\alpha\in\Delta, \qquad \alpha\not\in D(w)\iff -w_0\alpha\in D(w^\vee).
$$
Thus, this set has cardinality $2\rk(G)$. This proves statement $(i).$

\medskip
\noindent\underline{Claim.} The closed subset $X-\Omega$ has $2\rk(G)$
irreducible components of codimesion one in $X$.

\medskip

Note that $G \times_B C^+$ contains $U^- \times C^+$ as an open
subset. 
The latter being an affine space, it follows that $\CC[G \times_B
C^+]^*= \CC^*$.
By Proposition~\ref{prop:rameta}, $\CC[\Omega]^*=\CC^*.$
Let $E_1,\dots,E_s$  be the irreducible
components of $X-\Omega$ of codimension one in $X$. The previous discussion and the fact that $X$ is smooth 
imply that we have an exact sequence (see e.g. \cite[Proposition 6.5]{Hart}):
\begin{equation}
0\longto \oplus_{j=1}^s\ZZ E_s\longto
\Pic(X)\longto\Pic(\Omega)\longto 0.\label{eq:Picexact}
\end{equation}
Observe first that $\Pic(X)$ is a free abelian group of
rank $3\rk(G)$. 
Then, the irreducible components of the complement of $U^- \times C^+$ in $G \times_B C^+ $ are the
pullbacks by $\pi$ of the divisors $\overline{B^-s_\alpha \uo}$. There are
$\rk(G)$ of them. 
Using the exact sequence analogue to \eqref{eq:Picexact} and Proposition~\ref{prop:rameta} we 
deduce that $\Pic(\Omega)\simeq\Pic(G\times_B C^+)$ is a free abelian group of rank $\rk(G)$.

Now, the exactness of sequence \eqref{eq:Picexact} implies that
$s=2\rk(G)$, proving the claim.

\medskip
Since $\eta$ is proper and dominant, it is surjective.
Then, Proposition~\ref{prop:Zariskidivisor} implies that for any
$j=1,\dots,2\rk(G)$, there exists a unique irreducible component $D_j$
of $Y-Y^\circ$ such that $\eta(D_j)=E_j$.
Thus, these $2\rk(G)$ divisors $D_j$'s are exactly the $D_{(\alpha,i)}$ for
$(\alpha,i)\in\Bcal$ with $\alpha\in \Delta$.
In other words, the other irreducible components of $Y-Y^\circ$,
namely the $D_{(\beta,i)}$ for
$(\beta,i)\in\Bcal$ with $\beta\not\in \Delta$, are contracted by $\eta$.
\end{proof}

\section{The kernel of the differential map}

Fix $(w_1,w_2,w_3)\in W^3$ satisfying Assumption~\eqref{eq:hypwi} and
consider the map $\eta\,:\,Y\longto X$ defined in
Section~\ref{sec:incidence}.

\begin{prop}
  \label{prop:Dcontract}
Let $(\beta,i)\in\Bcal$ such that $ \beta\not\in\Delta$.
Then, $D_{(\beta,i)}$ is contracted by $\eta$.
\end{prop}

The main aim of this section is to prove Proposition~\ref{prop:Dcontract}.  
Observe first that it is sufficient to prove the following assertion.

\medskip
\noindent
\underline{Claim:}
For any $x\in D=D_{(\beta,i)}$, the linear map $T_x\eta_{|D}$ is not injective.

\medskip
Indeed, since we work over $\CC$, we have that $\eta$ is separated. Moreover, by
$G$-equivariance and semicontinuity of the dimension of the kernel of the differential of a morphism, 
it is sufficient to prove the claim for $x\in(\{1\}\times U^2)\cdot x_0'$. 
Recall the notation of Section \ref{sec:open subset}.

\subsection{A description as the kernel of a matrix}
\label{sec:exple}

From now on, an element $(\beta, i) \in \Bcal$ as in Proposition \ref{prop:Dcontract} is fixed. Up to $S_3$-symmetry, assume that $i=1$.
It is convenient to set
$$
w=w_1\quad x=w_2^\vee \quad y=w_3^\vee\quad v=s_\beta w_1 \quad D=D_{(\beta,i)}.
$$
Then, Assumption~\eqref{eq:hypwi} is equivalent to
\begin{equation}
  \label{eq:4}
  \Phi(w)=\Phi(x)\sqcup\Phi(y),
  \quad
  v\leq^1 w
\quad
 \mbox{and} \quad
  \Phi(v)\not\subset\Phi(w).
\end{equation}
For any $\varphi\in \Phi$, fix  nonzero elements $\xi_\varphi$ and
$\xi^\varphi$ in $\lg_{-\varphi}$ and $(\lg^*)_{\varphi}$ respectively.

Fix $g_x$ and $g_y$ in $U$.
To this data, we attach a matrix $M=M(v,w,x,y,g_x,g_y)$ whose  rows are indexed by
$\Phi(w)$ and  columns by $\Phi(v)$. The entry at row
$\beta\in\Phi(w)$ and column  $\gamma\in\Phi(v)$ is
$$
M_{\beta\gamma}=\left\{
  \begin{array}{ll}
    \xi^\beta(g_x\inv\xi_\gamma)&{\rm if\ } \beta\in\Phi(x)\\
    \xi^\beta(g_y\inv\xi_\gamma)&{\rm if\ } \beta\in\Phi(y)
  \end{array}
  \right .
$$

\begin{lemma}
  \label{lem:TKerM}
  The Kernel of $M$ is isomorphic to the intersection
  $$
T_v\cap g_x T_{x^\vee}\cap g_y T_{y^\vee}\simeq T_{p}\eta_{|D},
  $$
where $p=[e:(\uo,g_xx\inv \uo,g_y y\inv \uo)]$.
\end{lemma}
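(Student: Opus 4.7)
The plan is to first apply Lemma~\ref{lem:Teta=interT}(ii) to identify $\ker(T_p\eta_{|D})$ with the triple intersection $T_v\cap g_x T_{x^\vee}\cap g_y T_{y^\vee}\subset T_{B/B}(G/B)\simeq \lg/\lb$: unwinding the parametrization $\phi$ of $G\times_B\bar C^+$, only the first entry of $(w_1,w_2,w_3)$ has been altered in passing to $(v,x^\vee,y^\vee)$, and the transverse $B$-translates in the other two factors are $g_x$ and $g_y$ respectively. It then remains to exhibit an explicit isomorphism $\ker M\simeq T_v\cap g_x T_{x^\vee}\cap g_y T_{y^\vee}$ via the coordinate map $\mathbf{c}=(c_\gamma)_{\gamma\in\Phi(v)}\longmapsto \sum_{\gamma\in\Phi(v)} c_\gamma\xi_\gamma\in T_v$.

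Since $T_v=\bigoplus_{\gamma\in\Phi(v)}\lg_{-\gamma}$, this map is already an isomorphism onto $T_v$, so one only has to rewrite the two remaining incidence conditions. The condition $\xi\in g_x T_{x^\vee}=\Ad(g_x)(T_{x^\vee})$ amounts to $\Ad(g_x^{-1})\xi\in T_{x^\vee}+\lb$, and since $T_{x^\vee}=\bigoplus_{\varphi\in\Phi^+\setminus\Phi(x)}\lg_{-\varphi}$, this is precisely the vanishing of the $\lg_{-\beta}$-component of $\Ad(g_x^{-1})\xi$ modulo $\lb$ for each $\beta\in\Phi(x)$. Each functional $\xi^\beta$ has weight $\beta\in\Phi^+$, so it vanishes on $\lb=\lh\oplus\lieu^+$ and descends to $\lg/\lb$; pairing it with $\Ad(g_x^{-1})\xi$ yields, up to the fixed nonzero scalar built into the choice of $\xi^\beta$, exactly $\sum_{\gamma\in\Phi(v)}M_{\beta\gamma}c_\gamma=0$. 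The symmetric argument replacing $(g_x,x)$ by $(g_y,y)$ produces the analogous equations indexed by $\beta\in\Phi(y)$.

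The final step uses the hypothesis $\Phi(w)=\Phi(x)\sqcup\Phi(y)$ from~\eqref{eq:4} to concatenate these two families of equations into the single system $M\mathbf{c}=0$ with rows indexed by $\Phi(w)$, which is the desired identification. I expect the main hurdle to be purely bookkeeping: verifying that $\xi^\beta$ really does descend to a functional on $\lg/\lb$ so that the pairing with $\Ad(g_x^{-1})\xi$ is unambiguous modulo $\lb$, and checking that the action of $g_x$ on the tangent space $T_{B/B}(G/B)$ that comes out of Lemma~\ref{lem:Teta=interT}(ii) really is the restriction of $\Ad(g_x)$ to $\lg/\lb$. Once these identifications are pinned down, the argument is a direct linear-algebraic unraveling and requires no new combinatorial input.
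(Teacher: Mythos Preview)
Your proposal is correct and matches the paper's intended argument. In fact the paper does not give an explicit proof of this lemma at all: it is stated and then immediately used, with the isomorphism $\Ker(T_p\eta_{|D})\simeq T_v\cap g_xT_{x^\vee}\cap g_yT_{y^\vee}$ being a direct instance of Lemma~\ref{lem:Teta=interT}(ii) (taking $(w_1',w_2',w_3')=(v,x^\vee,y^\vee)$ and $(g_1,g_2,g_3)=(e,g_x,g_y)$), and the identification of that intersection with $\Ker M$ being exactly the linear-algebraic unpacking you describe. Your observation that $\xi^\beta\in(\lg^*)_\beta$ kills every weight space $\lg_\mu$ with $\mu\neq-\beta$, hence all of $\lb$, is precisely what is needed to make the pairing well-defined on $\lg/\lb$; and the decomposition $\Phi(w)=\Phi(x)\sqcup\Phi(y)$ is what lets you assemble the two families of linear conditions into the single matrix $M$ with rows indexed by $\Phi(w)$.
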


\begin{proof}
The isomorphism  $T_v\cap g_x T_{x^\vee}\cap g_y T_{y^\vee}\simeq T_{p}\eta_{|D}$ was proved in Lemma~\ref{lem:Teta=interT}. 
The fact that the intersection is the kernel of $M$ is obvious. 
\end{proof}

We deduce

\begin{lemma}
\label{lem:kerM non zero}
  If the Kernel of $M=M(v,w,x,y,g_x,g_y)$ is nonzero for any $g_x,g_y\in U^2$ then $D=D_{(\beta,i)}$ is contracted.
\end{lemma}

\begin{exple}
In the root system $D_4$, consider $w=s_2s_3s_1s_2s_4s_2$ and
$v=ws_2$. Then
$$
\Phi(w)=\{\alpha_{2}, \alpha_{2} + \alpha_{4}, \alpha_{4}, \alpha_{1}
+ \alpha_{2} + \alpha_{4}, \alpha_{2} + \alpha_{3} + \alpha_{4},
\alpha_{1} + 2\alpha_{2} + \alpha_{3} + \alpha_{4}\}$$
and
$$
\Phi(v)=\{
\alpha_{4}, \alpha_{2} + \alpha_{4}, \alpha_{1} + \alpha_{2} + \alpha_{4}, \alpha_{2} + \alpha_{3} + \alpha_{4}, \alpha_{1} + \alpha_{2} + \alpha_{3} + \alpha_{4}
\}.
$$
Consider a generic matrix
$$
\xi=\left(\begin{array}{rrrr|rrrr}
0 & x_{0} & x_{1} & x_{2} & x_{6} & x_{7} & x_{8} & 0 \\
0 & 0 & x_{3} & x_{4} & x_{9} & x_{10} & 0 & - x_{8} \\
0 & 0 & 0 & x_{5} & x_{11} & 0 & - x_{10} & - x_{7} \\
0 & 0 & 0 & 0 & 0 & - x_{11} & - x_{9} & - x_{6} \\
\hline
 0 & 0 & 0 & 0 & 0 & - x_{5} & - x_{4} & - x_{2} \\
0 & 0 & 0 & 0 & 0 & 0 & - x_{3} & - x_{1} \\
0 & 0 & 0 & 0 & 0 & 0 & 0 & - x_{0} \\
0 & 0 & 0 & 0 & 0 & 0 & 0 & 0
\end{array}\right)
$$
in Lie$(U)$ and set $u=\exp(\xi)$. The matrix $M(v,w,w,e,u,e)$ is

\bigskip
\noindent
\resizebox{1\hsize}{!}{
$
\left(\begin{array}{c|ccccc}
  &\alpha_4&\alpha_2+\alpha_4&\alpha_1+\alpha_2+\alpha_4&\alpha_2+\alpha_3+\alpha_4&\alpha_1+\alpha_2+\alpha_3+\alpha_4\\
        \hline
\alpha_4&1 & - x_{3} & \frac{1}{2} x_{0} x_{3} -  x_{1} & \frac{1}{2}
                                                          x_{3} x_{5}
                                                          + x_{4} &
                                                                    -\frac{1}{3} x_{0} x_{3} x_{5} - \frac{1}{2} x_{0} x_{4} + \frac{1}{2} x_{1} x_{5} + x_{2} \\
\alpha_2&0 & - x_{11} & x_{0} x_{11} & x_{5} x_{11} & - x_{0} x_{5} x_{11} \\
\alpha_2+\alpha_4&0 & 1 & - x_{0} & - x_{5} & x_{0} x_{5} \\
\alpha_1+\alpha_2+\alpha_4&0 & 0 & 1 & 0 & - x_{5} \\
\alpha_2+\alpha_3+\alpha_4&0 & 0 & 0 & 1 & - x_{0} \\
\alpha_1 +2\alpha_2 +\alpha_3 +\alpha_4&0 & 0 & 0 & 0 & 0
\end{array}\right)
$
}

\bigskip
Its kernel is nontrivial since two rows are proportional. The authors
did not find such a general reason to prove that the kernel of any $M$
is nontrivial. Instead, we use the fact that the result is known in the
case of Poincaré duality, and we reduce to it by means of Theorem \ref{th:combi}.    
\end{exple}

\subsection{The case when a submatrix is strictly triangular}

Define the height $h\,:\,\Phi\longto\ZZ$, by $h(\sum_{\alpha\in\Delta}n_\alpha
\alpha)=\sum_{\alpha\in\Delta}n_\alpha$.
For $h\in\ZZ$, we set
$\Phi(w)_h=\{\varphi\in\Phi(w)\,:\,h(\varphi)=h\}$ and $\Phi(w)_{\leq
  h}=\{\varphi\in\Phi(w)\,:\,h(\varphi)\leq h\}$. Note that if $\varphi \leq \psi$, then $h(\varphi)\leq h(\psi)$.

The following lemma is well-known:

\begin{lemma}
  \label{lem:actionU}
  Let $\gamma\in\Phi$.
  For any $g\in U$ and $\xi\in\lg_{-\gamma}$, we have
  $$
g\xi\in\xi+\sum_{\psi<\gamma}\lg_{-\psi}.
  $$
\end{lemma}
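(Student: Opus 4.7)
The plan is a direct computation via the exponential parametrization of $U$. Since $U$ is a connected unipotent algebraic group with Lie algebra $\ln=\bigoplus_{\alpha\in\Phi^+}\lg_\alpha$, the exponential map $\exp\colon\ln\to U$ is a bijective morphism, and I may write $g=\exp(X)$ for a unique $X\in\ln$. The standard identity $\Ad(\exp X)=\exp(\ad X)$ then gives
$$
\Ad(g)\xi=\xi+\sum_{k\geq 1}\frac{1}{k!}\ad(X)^k\xi,
$$
and the whole task reduces to showing that each term with $k\geq 1$ lies in $\sum_{\psi<\gamma}\lg_{-\psi}$.

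Decomposing $X=\sum_{\alpha\in\Phi^+}X_\alpha$ according to root spaces, each iterated bracket $[X_{\alpha_1},[X_{\alpha_2},\dots,[X_{\alpha_k},\xi]\dots]]$ occurring in the expansion of $\ad(X)^k\xi$ has weight $-\gamma+\alpha_1+\cdots+\alpha_k$, i.e.\ lies in $\lg_{-\psi}$ with $\psi=\gamma-(\alpha_1+\cdots+\alpha_k)$. When $k\geq 1$, the vector $\gamma-\psi=\alpha_1+\cdots+\alpha_k$ is a nonzero element of $\sum_{\alpha\in\Delta}\NN\alpha$; hence $\psi<\gamma$ in the order of the paper whenever $\psi$ is itself a root. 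If $\psi\notin\Phi$, the corresponding weight space is zero and the term contributes nothing. Summing over $k$ gives the claimed inclusion.

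I do not anticipate a serious obstacle: the whole argument is weight-grading bookkeeping. The one mildly delicate point is that the weight $-\psi$ may be zero (when $\gamma=\alpha_1+\cdots+\alpha_k$), so that the contribution lands in $\lh=\lg_0$ rather than in some $\lg_{-\psi}$ with $\psi\in\Phi$. This $\lh$-part is immaterial for the uses of the lemma in the sequel: in Proposition~\ref{prop:kerM non zero} and the surrounding discussion, $\Ad(g\inv)\xi_\gamma$ is only ever paired with linear forms $\xi^\beta\in(\lg^*)_\beta$ with $\beta\in\Phi$, each of which vanishes on $\lh$. One may therefore read $\sum_{\psi<\gamma}\lg_{-\psi}$ as tacitly including the $\psi=0$ contribution, without affecting any subsequent application.
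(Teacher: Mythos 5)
The paper does not actually prove Lemma~\ref{lem:actionU}; it labels it ``well-known'' and moves on, so there is no proof in the source to compare against. Your argument is the standard one and it is correct: write $g=\exp(X)$, use $\Ad(\exp X)=\exp(\ad X)$ (a finite sum here since $\ad X$ is nilpotent), and observe that every iterated bracket of weight vectors from $\ln$ with $\xi\in\lg_{-\gamma}$ lands in a weight space $\lg_{-\psi}$ with $\gamma-\psi\in\sum_{\alpha\in\Delta}\NN\alpha$ nonzero.

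On the one delicate point you flag (the case $\psi=0$, giving a contribution in $\lh$): your observation that the $\lh$-component is invisible in the applications, because each $\xi^\beta$ with $\beta\in\Phi$ vanishes on $\lh$, is correct. But there is a slightly cleaner way to say it that also makes the lemma literally true as stated: in this paper, $T_w$ and its translates are subspaces of $T_{B/B}G/B\simeq\lg/\lb\simeq\lieu^-$, and ``$g\xi$'' means the image of $\Ad(g)\xi$ under the projection $\lg\to\lg/\lb$. Under that reading, the $\lh$-component (and any component in $\lieu^+$) is killed outright, so the statement $g\xi\in\xi+\sum_{\psi<\gamma}\lg_{-\psi}$ holds on the nose with $\psi$ ranging over positive roots. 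Either justification is fine; yours is a legitimate proof of what the lemma is used for.
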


As an immediate consequence of Lemma~\ref{lem:actionU}, we get that 
$$
\begin{array}{lcl}
  M_{\beta\gamma}=1&{\rm if}& \beta=\gamma,\\
   M_{\beta\gamma}\neq 0&{\rm implies}& \beta\leq\gamma.
\end{array}
$$
We improve this fact as follows.
%We operate this fact in the following

\begin{lemma}
  \label{lem:forme}
  If at least one of the following assertions holds:
  \begin{enumerate}
  \item $\exists h\in\NN^*$ such that $\sharp\Phi(v)_{\leq
      h}>\sharp\Phi(w)_{\leq h}$;
    \item $\exists h\in\NN^*$ such that $\sharp\Phi(v)_{\leq
      h}=\sharp\Phi(w)_{\leq h}$  and $\Phi(v)_{h+1}\not\subset\Phi(w)$;
  \end{enumerate}
  then $\Ker M\neq\{0\}$.
\end{lemma}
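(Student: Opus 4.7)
The plan is to exploit the ``height-triangular'' structure of $M$ that follows from Lemma~\ref{lem:actionU}. That lemma, combined with the definition of $M$, gives the two observations already noted in the text:
$$
M_{\beta\gamma}=1 \text{ if } \beta=\gamma,\qquad M_{\beta\gamma}\neq 0 \Rightarrow \beta\leq\gamma.
$$
In particular, if $M_{\beta\gamma}\neq 0$ then $h(\beta)\leq h(\gamma)$, and if moreover $\gamma\notin\Phi(w)$ then $\beta<\gamma$ so $h(\beta)\leq h(\gamma)-1$. The whole proof will consist of exhibiting, in each of the two cases, a family of columns of $M$ whose nonzero coefficients all sit inside a subspace of strictly smaller dimension; this forces a nontrivial linear dependence and hence a nonzero kernel vector.

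In case~(1), I would simply consider the subfamily of columns of $M$ indexed by $\Phi(v)_{\leq h}$. For each such column $\gamma$, the inequality $\beta\leq\gamma$ forces $h(\beta)\leq h$, so every nonzero entry of these $\sharp\Phi(v)_{\leq h}$ columns lies in the coordinate subspace $\CC^{\Phi(w)_{\leq h}}$ of dimension $\sharp\Phi(w)_{\leq h}<\sharp\Phi(v)_{\leq h}$. Linear dependence is then immediate.

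In case~(2), by hypothesis I can pick some $\gamma_0\in\Phi(v)_{h+1}\setminus\Phi(w)$. I would then consider the family of columns indexed by $\Phi(v)_{\leq h}\cup\{\gamma_0\}$. For $\gamma\in\Phi(v)_{\leq h}$ the preceding argument still applies. For the extra column $\gamma_0$, the key point is that $\gamma_0\notin\Phi(w)$, so no row of $M$ is indexed by $\gamma_0$ itself; hence any nonzero entry $M_{\beta\gamma_0}$ satisfies $\beta<\gamma_0$, which gives $h(\beta)\leq h$. Thus all $\sharp\Phi(v)_{\leq h}+1=\sharp\Phi(w)_{\leq h}+1$ selected columns again live inside $\CC^{\Phi(w)_{\leq h}}$, and linear dependence follows.

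There is essentially no obstacle here: once the height-triangularity is in place, both cases reduce to a pigeonhole-style dimension count. The only subtle point to be careful about is the role of $\gamma_0\notin\Phi(w)$ in case~(2): without it, the column $\gamma_0$ could have a nonzero diagonal entry at row $\gamma_0$ of height $h+1$, which would spoil the dimension count. This is exactly why the hypothesis is formulated as $\Phi(v)_{h+1}\not\subset\Phi(w)$ rather than merely requiring $\Phi(v)_{h+1}$ to be nonempty.
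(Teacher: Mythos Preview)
Your proof is correct and follows essentially the same route as the paper: both exploit Lemma~\ref{lem:actionU} to see that the chosen columns have all their nonzero entries in the rows $\Phi(w)_{\leq h}$, and then conclude by a dimension count. The paper phrases this via a block-triangular decomposition of $M$ (with a rectangular top-left block), while you phrase it as columns living in a proper coordinate subspace; these are the same argument.
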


\begin{proof}
  First, number the elements of $\Phi(w)$ (and independently $\Phi(v)$)
  in such a way that the map $\beta\mapsto h(\beta)$ is nondecreasing.
  Let $N$ be the submatrix of $M$ with rows and columns in $\Phi(w)_{\leq
    h}$ and $\Phi(v)_{\leq h}$ respectively.
  Lemma~\ref{lem:actionU} implies that $M$ has the following form
  
  \begin{equation}
  \begin{pmatrix}
    N&\star\\
    0&\star
  \end{pmatrix}\label{eq:formM1}
\end{equation}
 
  With the first assumption of the lemma, $N$ has more columns than rows;
  hence its kernel is not reduced to zero. By \eqref{eq:formM1}, that of
  $M$  too.
  
  \medskip

 Assume now that we are in the second case. 
Then, $N$ is a square matrix
 and we can fix $\gamma\in\Phi(v)_{h+1}$
 such that $\gamma\not\in\Phi(w)$.
 Up to renumbering, assume that $\gamma$ is the first root in
 $\Phi(v)_{h+1}$.

 Let $\tilde N$ be the submatrix of $M$ with rows and columns in $\Phi(w)_{\leq
    h}$ and $\Phi(v)_{\leq h}\cup\{\gamma\}$ respectively.
 Lemma~\ref{lem:actionU} implies that $M$ is block triangular as in
 \eqref{eq:formM1}
 with $\tilde N$ in place of $N$.
 Hence, the kernel $M$ is not reduced to zero. 
\end{proof}

\subsection{The case when no submatrix is strictly triangular}

We now assume that Lemma~\ref{lem:forme} does not apply; that is
that:
\begin{enumerate}
\item[(H1)]
  $\forall h\in\NN^*\qquad \sharp\Phi(v)_{\leq
      h}\leq \sharp\Phi(w)_{\leq h}$; and
\item[(H2)] $\forall h\in\NN^*$ such that $\sharp\Phi(v)_{\leq
      h}=\sharp\Phi(w)_{\leq h}$ we have $\Phi(v)_{h+1}\subset\Phi(w)$.
  \end{enumerate}

  Observe that (H2) can be re-written as
  \begin{center}
    (H2') $\forall h\in\NN^*\qquad \sharp\Phi(v)_{<
      h}=\sharp\Phi(w)_{< h}\implies \Phi(v)_{h}\subset\Phi(w)$.
  \end{center}
Set
$$
\begin{array}{l}
\Phi(w)-\Phi(v)=\{\beta_0,\beta_1,\dots,\beta_s\}\\
\Phi(v)-\Phi(w)=\{\gamma_0,\gamma_1,\dots,\gamma_t\}
\end{array}
$$
by labeling the elements by nondecreasing height.

A key result to understand the matrix $M$ is the following

\begin{prop}
\label{prop:decoupage M}
%Let $\beta$ denote the positive root such that $w=vs_\beta$.
  With above notation and assuming (H1) and (H2), we have
  \begin{enumerate}
%\item $\beta_0=\beta$;
\item $w=vs_{\beta_0}$;
  \item $s=t+1$;
\item $h(\beta_0)<h(\gamma_0)<h(\beta_1)<h(\gamma_1)<\cdots<h(\beta_s)$;
\item for any $i=0,\dots,t$, there exists $k_i\in\NN^*$ such that $\beta_{i+1}=\gamma_i+k_i\beta_0$.
  \end{enumerate}
\end{prop}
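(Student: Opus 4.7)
The strategy combines two ingredients. The Bruhat covering $v\leq^1 w$ produces a unique positive root $\beta$ with $w=vs_\beta$ and $v\beta\in\Phi^+$, whence a routine verification (via $w\beta=-v\beta$) gives $\beta\in\Phi(w)\setminus\Phi(v)$; separately, the assumptions (H1) and (H2') tightly constrain where $\Phi(v)$ and $\Phi(w)$ can disagree height-by-height. The goal is to identify $\beta$ with $\beta_0$ and to exhibit $\Phi(w)\triangle\Phi(v)\setminus\{\beta\}$ as a system of strictly height-interleaved pairs lying on the $\beta$-root-string. Item~(2) is immediate from $\ell(w)-\ell(v)=1$, which forces $|\Phi(w)-\Phi(v)|-|\Phi(v)-\Phi(w)|=1$, hence $s=t+1$.

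The first technical step classifies the non-$\beta$ elements of $\Phi(w)\triangle\Phi(v)$ via the sign of $s_\beta$. A direct computation from $\alpha\in\Phi(vs_\beta)\iff \alpha>0$ and $vs_\beta\alpha\in\Phi^-$ shows: when $s_\beta\alpha>0$, the pair $\{\alpha,s_\beta\alpha\}$ is \emph{cross-type}, with exactly one element in $\Phi(w)-\Phi(v)$ and the other in $\Phi(v)-\Phi(w)$; when $s_\beta\alpha<0$, setting $\alpha^{*}:=-s_\beta\alpha\in\Phi^+$, the pair $\{\alpha,\alpha^{*}\}$ is \emph{same-type}, with both elements on the same side of the symmetric difference, and $\alpha+\alpha^{*}=\langle\alpha,\beta^\vee\rangle\beta\in\NN^{*}\beta$. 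Next, introduce the height-counting function $\delta_h:=|\Phi(w)_{\leq h}|-|\Phi(v)_{\leq h}|=|\{j:h(\beta_j)\leq h\}|-|\{i:h(\gamma_i)\leq h\}|$. Hypothesis (H1) gives $\delta_h\geq 0$, item~(2) gives $\delta_h=1$ for $h\gg 0$, and (H2') forbids $\delta$ from decreasing at any height $h$ where $\delta_{h-1}=0$. The crux is to show $\delta_h\in\{0,1\}$ for all $h$, simultaneously excluding same-type pairs. Granting this, each upward jump of $\delta$ marks a new $\beta_j$ and each downward jump marks a new $\gamma_i$, yielding the strict interleaving $h(\beta_0)<h(\gamma_0)<h(\beta_1)<\cdots<h(\beta_s)$ of item~(3). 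The $s_\beta$-symmetry then cross-pairs $\gamma_i$ with $\beta_{i+1}$, giving $\beta_{i+1}=\gamma_i-\langle\gamma_i,\beta^\vee\rangle\beta$; height comparison forces $k_i:=-\langle\gamma_i,\beta^\vee\rangle\in\NN^{*}$, which is item~(4). Finally, $\beta$ being the unique unpaired element of the $s_\beta$-decomposition, together with $\beta_0$ being the minimum-height element of $\Phi(w)-\Phi(v)$ (forced to lie there rather than in $\Phi(v)-\Phi(w)$ by (H2')), yields $\beta=\beta_0$, proving item~(1).

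The main obstacle is precisely the claim $\delta_h\in\{0,1\}$ together with the exclusion of same-type pairs. A same-type pair $\{\alpha,\alpha^{*}\}\subset\Phi(v)-\Phi(w)$ would, at its lower element's height, either push $\delta$ negative (violating (H1)) or force pre-existing $\Phi(w)-\Phi(v)$ elements whose presence then makes $\delta$ overshoot $1$ and later demands a downward compensation conflicting with (H2') at the next return of $\delta$ to $0$; the dual argument rules out same-type pairs contained in $\Phi(w)-\Phi(v)$. The bookkeeping should go through by induction on heights, combined with Lemma~\ref{lem:biconvex plus} applied to $\Phi(v)$ and $\Phi(w)$ (biconvexity precluding certain nonnegative linear combinations of a set with its complement) and, if needed, Theorem~\ref{th:combi} applied to the biconvex decomposition $\Phi(w)=\Phi(x)\sqcup\Phi(y)$ to control how putative same-type pairs interact with the splitting.
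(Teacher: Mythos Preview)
Your overall architecture is reasonable, but the argument has a genuine gap at the step you yourself flag as ``the main obstacle'': the claim that $\delta_h\in\{0,1\}$ for all $h$, together with the exclusion of same-type pairs, is not proved. Hypotheses (H1) and (H2') give only $\delta_h\geq 0$ and ``$\delta$ cannot decrease at a height where it was just $0$''; nothing in them prevents $\delta$ from reaching $2$ or more and later returning to $1$. Your Lemma~\ref{lem:biconvex plus} argument does rule out same-type pairs in $\Phi(v)-\Phi(w)$ (since $\alpha+\alpha^{*}=m\beta$ with $\beta\in\Phi(w)$ and $\alpha,\alpha^{*}\notin\Phi(w)$ contradicts the lemma), but the dual direction fails: when $\alpha,\alpha^{*}\in\Phi(w)-\Phi(v)$, all three of $\alpha,\alpha^{*},\beta$ lie in $\Phi^+\setminus\Phi(v)$, and no contradiction arises. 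Invoking Theorem~\ref{th:combi} would not help either, as that theorem concerns the decomposition $\Phi(w)=\Phi(x)\sqcup\Phi(y)$, which plays no role in comparing $\Phi(v)$ with $\Phi(w)$.

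What you are missing is the structural fact the paper isolates as equation~\eqref{eq:Phiwbeta}: on every $\beta$-string $(\theta+\ZZ\beta)\cap\Phi^+$, the set $\Phi(w)$ occupies the top segment and $\Phi(v)$ the bottom segment, of the \emph{same} length. The top-segment claim is immediate from convexity of $\Phi(w)$ together with $\beta\in\Phi(w)$; the equal-length claim is Lemma~\ref{lem:inversions cover}, proved by a short induction on reduced expressions. This string structure is what drives the paper's inductive scheme (P0)--(P1)--(P2): at each stage, the ``next difference'' on the relevant string is forced to lie in the correct set, and (H1)--(H2) then pin down which global index it carries. Your $s_\beta$-pairing idea is in the right spirit but does not by itself give $\gamma_i\leftrightarrow\beta_{i+1}$: on a string carrying several pairs of differences, $s_\beta$ matches the lowest $\gamma$ with the highest $\beta$, whereas item~(4) only asserts that $\gamma_i$ and $\beta_{i+1}$ lie on the same string, a conclusion the paper reaches by the inductive bookkeeping rather than by reflection.
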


\begin{proof}
  Set $\delta=v\inv \beta$ in such a way that $w=vs_\delta$.  By an immediate induction, it is sufficient to prove the following
  three assertions:
  \begin{enumerate}
  \item[(P0)] $\Phi(w)_{\leq h(\delta)}=\Phi(v)_{\leq
      h(\delta)}\sqcup\{\delta\}$, and $\beta_0=\delta$.
\item[(P1)] If 
  \begin{enumerate}
  \item $h(\beta_0)<h(\gamma_0)<\cdots<h(\beta_i)$,
\item $\Phi(w)_{\leq h(\beta_i)}-\Phi(v)=\{\beta_0,\beta_1,\dots,\beta_i\}$,
\item $\Phi(v) _{\leq
    h(\beta_i)}-\Phi(w)=\{\gamma_0,\gamma_1,\dots,\gamma_{i-1}\}$, 
\item \label{hyp2} $\forall \, \, 0\leq j<i, \qquad\exists \, \,  k\in\NN : \qquad
  \beta_{j+1}-k\beta_0=\gamma_j$, and
\item\label{hyp1}  $\Phi(w)_{>h(\beta_i)}\neq \Phi(v)_{>h(\beta_i)}$
  \end{enumerate}
then
\begin{enumerate}
\setcounter{enumii}{5}
\item $h(\gamma_i)>h(\beta_i)$,
\item $\Phi(w)_{\leq
    h(\gamma_i)}-\Phi(v)=\{\beta_0,\beta_1,\dots,\beta_i\}$, and
\item $\Phi(v) _{\leq h(\gamma_i)}-\Phi(w)=\{\gamma_0,\gamma_1,\dots,\gamma_{i}\}$. 
\end{enumerate}
\item[(P2)] If 
  \begin{enumerate}
  \item $h(\beta_0)<h(\gamma_0)<\cdots<h(\gamma_i)$,
\item $\Phi(w)_{\leq
    h(\gamma_i)}-\Phi(v)=\{\beta_0,\beta_1,\dots,\beta_i\}$, 
\item $\Phi(v) _{\leq
    h(\gamma_i)}-\Phi(w)=\{\gamma_0,\gamma_1,\dots,\gamma_{i}\}$, and 
\item $\forall \, \,  0\leq j<i, \qquad\exists \, \,  k\in\NN : \qquad \beta_{j+1}-k\beta_0=\gamma_j$
  \end{enumerate}
then
\begin{enumerate}
\setcounter{enumii}{4}
\item $h(\beta_{i+1})>h(\gamma_i)$,
\item $\Phi(w)_{\leq h(\beta_{i+1})}-\Phi(v)=\{\beta_0,\beta_1,\dots,\beta_{i+1}\}$,
\item $\Phi(v) _{\leq
    h(\beta_{i+1})}-\Phi(w)=\{\gamma_0,\gamma_1,\dots,\gamma_{i}\}$, and
\item $\exists \, \,  k\in\NN: \qquad \beta_{i+1}-k\beta_0=\gamma_i$.
\end{enumerate}
  \end{enumerate}

\noindent {\sc Proof of }(P0).
Fix a positive root $\theta \neq \beta_0$.
It is well known that there exist integers  $p\leq q$ such that
$$ 
(\theta+\ZZ\beta_0)\cap \Phi^+=\{\theta+k\beta_0\,:\, k\in
[p;q]\cap\ZZ\}.
$$
Since $\beta_0\in \Phi(w)$, the convexity of $\Phi(w)$ implies that 
$$ 
(\theta+\ZZ\beta_0)\cap \Phi(w)=\{\theta+k\beta_0\,:\, k\in
[r;q]\cap\ZZ\},
$$
for some integer $p\leq r\leq q+1$. 
Then 
\begin{equation}
  \label{eq:Phiwbeta}
  (\theta+\ZZ\beta_0)\cap \Phi(v)=\{\theta+k\beta_0\,:\, k\in
[p;s]\cap\ZZ\},\quad\mbox{where }s=p+q-r.
\end{equation}
In other words, $(\theta+\ZZ\beta_0)\cap \Phi(w)$ consists in the 
$q-r+1$ last elements of $(\theta+\ZZ\beta_0)\cap \Phi^+$, whereas $(\theta+\ZZ\beta_0)\cap \Phi(v)$ consists in the
$q-r+1$ first elements. In \cite{distrib}, there is a geometric proof
of equality~\eqref{eq:Phiwbeta}. 
For completeness, we include a combinatorial one. By coconvexity of $\Phi(v)$ it is sufficient to prove the following lemma. 

\begin{lemma}
    \label{lem:inversions cover}
    Let $w \in W$, $\delta \in \Phi^+$ and $v=w s_\delta \leq^1 w$. For any $\theta \in \Phi^+ \setminus\{\delta\}$ we have:
    $$ \sharp((\theta + \ZZ\delta) \cap \Phi(w))= \sharp( (\theta + \ZZ\delta) \cap \Phi(v)).$$
\end{lemma}

\begin{proof}
    Enumerate the simple roots, that is $\Delta= \{\alpha_1, \dots \alpha_r\}.$ Let $w=s_{i_m} \dots s_{i_1}$, 
    for some integers $1 \leq i_j \leq r$, be a reduced expression of $w$. 
    Then there exists a unique $1 \leq k \leq m$ such that 
    $$\delta= \begin{cases}
        s_{i_1} \dots s_{i_{k-1}} \alpha_{i_k} & \text{if} \quad 1< k\\
        \alpha_{i_1} & \text{otherwise}.
    \end{cases}  $$ 
Moreover, $v= s_{i_m} \dots s_{i_{k+1}} s_{i_{k-1}} \dots s_{i_1}.$ If
$k=m$, the statement is obvious. 
Indeed, the condition $v \leq^1_L w$ implies that  $\Phi(w)=
\Phi(v) \cup \{\delta\}$. 
Otherwise, let $w'=s_{i_m} w$ and $v'=s_{i_m}v$. We have that $v'\leq^1 w'$ and 
$$\Phi(w)= \Phi(w') \sqcup \{(w')\inv \alpha_{i_m}\} \quad \text{and} \quad \Phi(v)= \Phi(v') \sqcup \{(v')\inv \alpha_{i_m}\}.  $$
But, paying attention if $k=1$,
\begin{align*}
    (w')\inv \alpha_{i_m} = & s_{i_1} \dots s_{i_{m-1}} \alpha_{i_m}\\
    = & s_{1_1}\dots s_{i_{k-1}}s_{i_k}s_{i_{k-1}}\dots s_{i_1}(v')^\inv \alpha_{i_m}\\
    = & s_\delta (v')^\inv \alpha_{i_m} \in (v')^\inv \alpha_{i_m} + \ZZ \delta.
\end{align*}
The statement follows easily by induction on $m-k.$
\end{proof}

Going back to the proof, we have that 
$\Phi(w)_{<h(\delta)}\subset\Phi(v)$ and
$\Phi(w)_{h(\delta)}-\{\beta\}\subset\Phi(v)$. 
Using $(H1)$, we get  $\Phi(w)_{<h(\delta)}=\Phi(v)_{<h(\delta)}$.
Now, (H2) implies that $\Phi(v)_{h(\delta)}\subset \Phi(w)$. Hence, 
$\Phi(w)_{\leq h(\delta)}=\Phi(v)_{\leq
      h(\delta)}\sqcup\{\delta\}$. 

Then, $\delta$ is the unique element of minimal height in
$(\Phi(v)\cup\Phi(w))-(\Phi(v)\cap\Phi(w))$, and it belongs to
$\Phi(w)$. Hence, $\beta_0=\delta$.\\

\noindent {\sc Proof of }(P1).
Let $\theta\in (\Phi(v)\cup\Phi(w))-(\Phi(v)\cap\Phi(w))$ such that
$h(\theta)>h(\beta_i)$ and of minimal height with these
properties. Such a $\theta$ exists by Hypothesis~(P1e). 
Roughly speaking $\theta$ is the next difference. 

Consider $\theta+\ZZ\beta$. 
 By Hypothesis~(P1d), for any $0\leq j<i$, $\gamma_j\in \theta+\ZZ\beta$ if
 and only if $\beta_{j+1}\in \theta+\ZZ\beta$.
Hence, by \eqref{eq:Phiwbeta}, the root $\theta$ (that is the next difference in
$\theta+\ZZ\beta$) belongs to $\Phi(v)-\Phi(w)$.

The assumptions imply that
$\sharp\Phi(v)_{<h(\beta_i)}=\sharp\Phi(w)_{<h(\beta_i)}$. 
Hence (H2) gives $\Phi(v)_{h(\beta_i)}\subset \Phi(w)$. 
Thus, $h(\theta)\neq h(\beta_i)$ and $h(\theta)> h(\beta_i)$.

The set $\{\gamma_0,\dots,\gamma_{i-1},\theta\}\sqcup\Phi(w)_{\leq
  h(\theta)}$ is contained in $\{\beta_0,\dots,\beta_i\}\sqcup\Phi(v)_{\leq
  h(\theta)}$, and even equal by (H1). Then $\gamma_i=\theta$. 
This ends the proof of (P1). 

\medskip

\noindent {\sc Proof of }(P2).
Let $\theta\in (\Phi(v)\cup\Phi(w))-(\Phi(v)\cap\Phi(w))$ such that
$h(\theta)>h(\gamma_i)$ and of minimal height with these
properties. Such a $\theta$ exists since
$\sharp\Phi(w)=\sharp\Phi(v)+1$.

The assumptions imply that
$\sharp\Phi(v)_{<h(\theta)}=\sharp\Phi(w)_{<h(\theta)}$. 
Then (H2) gives $\Phi(v)_{h(\theta)}\subset \Phi(w)$ and
$\theta\in\Phi(w)$.

Consider $\theta+\ZZ\beta$ and recall \eqref{eq:Phiwbeta}.
For any $j<i$, $\gamma_j\in \theta+\ZZ\beta$ if
 and only if $\beta_{j+1}\in \theta+\ZZ\beta$.
We deduce that  there exists $k\in\NN$ such that $\theta-k\beta_0$
belongs to
$\Phi(v)-\{\gamma_0,\dots,\gamma_{i-1}\}$ and not in $\Phi(w)$. 
But $\gamma_i$ is the only such element of height less than
$h(\theta)$. 
Hence,  $\theta-k\beta_0=\gamma_i$.

What we have just proved also implies that
$\theta+\ZZ\beta_0=\gamma_i+\ZZ\beta_0$ and that $\theta$ is the only
element in $\Phi(w)-\Phi(v)$ of its height. It follows that
$\theta=\beta_{i+1}$ and
$\Phi(v)_{h(\theta)}\sqcup\{\theta\}=\Phi(w)_{h(\theta)}$.
This ends the proof of (P2).
\end{proof}

To emphasize the structure of $M$ in blocks, let us set, for any
$i=0,\dots,s$
$$
\begin{array}{l}
  \Phi_i^-=\{\theta\in\Phi(v)-\{\gamma_0,\dots,\gamma_{s-1}\}\;|\;h(\gamma_{i-1})\leq
  h(\theta)\leq h(\beta_i)\}\\
\Phi_i^+=\{\theta\in\Phi(v)-\{\gamma_0,\dots,\gamma_{s-1}\}\;|\;h(\beta_i)<
  h(\theta)< h(\gamma_i)\},
\end{array}
$$
where, by convention $h(\gamma_{-1})=0$ and $h(\gamma_s)=+\infty$.

For $i=0,\dots,s$, denote by $M_i^-$ the submatrix of $M$ whose 
rows and columns indices of its entries belong to $\Phi_i^-$.
For $i=0,\dots,s-1$, denote by $M_i^+$ the submatrix of $M$
corresponding to the 
rows $\Phi_i^+\sqcup\{\beta_i\}$ and columns
$\Phi_i^+\sqcup\{\gamma_i\}$.
Finally,  denote by $M_s^+$ the submatrix of $M$
corresponding to the 
rows $\Phi_s^+\sqcup\{\beta_s\}$ and columns
$\Phi_s^+$.

Observe that all the $M_i^\pm$ are square matrices excepted for $M_s^+$. 
Recall that the elements of $\Phi(v)$ and $\Phi(w)$ are numbered by
nondecreasing height. 
Then, Lemma~\ref{lem:actionU} implies easily that $M$ is
upper triangular by blocs with $M_0^-,M_0^+,M_1^-,\dots,M_s^+$ as
diagonal blocks.  
The same lemma also implies that each $M_i^-$ is upper triangular with
1's on the diagonal and that 
$$
M_s^+=
\begin{pmatrix}
  *&\cdots&*&\cdots&*\\
1&&*\\
&&\ddots&\\
&&0&&1
\end{pmatrix}.
$$

On the example in Section~\ref{sec:exple}, $s=1$, $M_0^-$ is the
identity matrix of size 1,  $M_0^+$ is the $(4\times 4)$-submatrix with rows
in $\{2,3,4,5\}$ and columns in $\{2,3,4,5\}$. The matrix $M_0^+$ is
empty in this case since $\Phi_1^+$ is.

\medskip

Then, elementary linear algebra gives

\begin{lemma}
  \label{lem:KerMi}
With above notation, we have
$\Ker M\neq\{0\}$ if and only if there exists $i\in\{0,\dots,s-1\}$
such that $M_i^+$ is not invertible. 
\end{lemma}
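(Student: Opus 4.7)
The plan is to exploit the block upper triangular structure of $M$ together with the precise shape of its diagonal blocks, both of which were already established immediately before the statement.

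Recall that $M$ is upper triangular by blocks with diagonal blocks, read from top-left to bottom-right, $M_0^-, M_0^+, M_1^-, M_1^+, \dots, M_s^-, M_s^+$. Each $M_i^-$ is square and upper triangular with $1$'s on the diagonal, hence invertible; and the final block $M_s^+$ has one more row than column, but its last $|\Phi_s^+|$ rows form a square upper triangular matrix with $1$'s on the diagonal, so $M_s^+$ itself is injective.

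The core of the proof is a general observation that I would prove by a short induction on the number of diagonal blocks: if a block upper triangular matrix $N$ has diagonal blocks $B_1, \dots, B_r$ with $B_1, \dots, B_{r-1}$ square and $B_r$ injective, then $\Ker N = \{0\}$ if and only if each $B_i$ with $i \leq r-1$ is invertible. The base case $r=1$ is trivial. For the induction step, decompose $N$ with $B_1$ as its top-left block and a block upper triangular matrix $N'$ (with diagonal blocks $B_2, \dots, B_r$) as its bottom-right part. If $B_1$ is not invertible, any nonzero $y \in \Ker B_1$ extended by zeroes in the remaining column blocks yields a nonzero element of $\Ker N$, since all lower block row equations become $0=0$. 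If $B_1$ is invertible, the equation $Nx = 0$ is equivalent to $N'x' = 0$ together with $x_1 = -B_1^{-1}(\ast\cdot x')$, so $\dim\Ker N = \dim\Ker N'$ and the induction hypothesis applies.

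Applying this to $M$, the square diagonal blocks split into the always invertible $M_i^-$ and the $M_i^+$ with $0 \leq i \leq s-1$. Hence $\Ker M \neq \{0\}$ if and only if some $M_i^+$ with $0 \leq i \leq s-1$ fails to be invertible, which is exactly the assertion of the lemma. No genuine obstacle arises here; the whole content lies in the preparatory Proposition~\ref{prop:decoupage M} and the resulting triangular shape, which reduce the question to this routine linear algebra fact.
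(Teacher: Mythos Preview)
Your proof is correct and is precisely the elementary linear algebra argument the paper invokes without detail. The paper simply writes ``Then, elementary linear algebra gives'' before stating the lemma, and your block upper triangular induction, together with the observation that each $M_i^-$ is unipotent and $M_s^+$ is injective, is exactly what fills that gap.
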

 
%\subsection{The determinant of the matrices $M_i^+$}

\subsection{The trick using Poincaré duality}

Recall that the matrix $M$ depends on the choice of a pair $(g_x,g_y)$
of elements in the unipotent group $U$, nevertheless the sets
$\Phi_i^\pm$, and the existence of a corresponding block subdivision
of $M$, only depend on $v$ and $w$.  

\begin{lemma}
  \label{lem:kerMH1H2}
  Under the assumptions (H1) and (H2), the Kernel of $M=M(v,w,x,y,g_x,g_y)$ is nonzero, for any $g_x,g_y\in U$.
\end{lemma}

\begin{proof}
First consider the case of Poincaré duality: $M(v,w,w,e,g,g')$ for $g$,
$g'$ in $U$. Since $\Phi(e)=\emptyset$, the matrix is independent of $g'$.

By Proposition~\ref{prop:divPoincare}, the divisor associated to
$(v,w^\vee,w_0)$ is contacted by $\eta$.
Then the Kernel of $M(v,w,w,e,g,g')$ is nonzero for any $g\in U$.
By Lemma~\ref{lem:KerMi} this implies that $\prod_i\det
M_i^+(v,w,w,e,g,g')=0$ for any $g\in U$.
Since $U$ is an irreducible  variety, this implies that there exists
$i_0$ such that $\det M_{i_0}^+(v,w,w,e,g,g')=0$ for any $g,g'\in U$.

\medskip

Consider now the matrix $M(v,w,x,y,g_x,g_y)$.
By Proposition~\ref{prop:calculdet} of Section~\ref{sec: A determinant}, the  only coefficients occurring
in 
$$
\det M_{i_0}^+(v,w,x,y,g_x,g_y)
$$
are indexed by roots  in the
interval $[\beta_{i_0};\gamma_{i_0}]$. By Proposition~\ref{prop:decoupage M} there exists a nonnegative 
integer $k$ such that $\gamma_{i_0} + k \beta_0 \not \in \Phi(w)$ and $\gamma_{i_0} + (k+1) \beta_0 \in \Phi(w).$ 
Moreover, $[\beta_{i_0}, \gamma_{i_0}] \subseteq [\beta_0 ; \gamma_{i_0} + k \beta_0]$ by Statement 4 of 
Proposition~\ref{prop:decoupage M}.

Then, we can apply Theorem~\ref{th:combi} with $\beta=\beta_0$ and $\gamma=\gamma_{i_0}+k\beta_0$, 
and we deduce that 
$$
\det M_{i_0}^+(v,w,x,y,g_x,g_y)
$$
is equal to either 
$\det M_{i_0}^+(v,w,w,e,g_x,g_x)$ or $\det M_{i_0}^+(v,w,e,w,g_y,g_y)$.
By the first part of the proof, $\det M_{i_0}^+(v,w,w,e,g,g')=0$ for any $g,g'\in U$. 

But, $\det M_{i_0}^+(v,w,e,w,g',g)=\det M_{i_0}^+(v,w,w,e,g,g')$.
Thus, 
$$
\det M_{i_0}^+(v,w,x,y,g_x,g_y)=0.
$$ 
With Lemma~\ref{lem:KerMi}, this concludes. 
\end{proof}

\subsection{Proof of Theorem~\ref{th:main}}

Lemmas~\ref{lem:kerMH1H2}, \ref{lem:forme} and \ref{lem:kerM non zero} implies the $D_{(\beta,i)}$ is contracted 
for any $(\beta,i)\in\Bcal$ with $\beta\not\in\Delta$. 
Now, Proposition~\ref{prop:rameta} implies that the ramification divisor of $\eta$ is contracted.  
Thus, Proposition~\ref{prop:c=1contract} implies that $\eta$ is birational. 
Finally, Proposition~\ref{prop:eta bir} allows to conclude.

\subsection{Proof of Theorem~\ref{th:combi}}
\label{sec:proof thm combi}

First, notice that the statement of Theorem~\ref{th:combi} only depends on the root system $\Phi$ and its set of positive roots $\Phi^+$. 
That is, the group $G$ plays no role.

We start with some simple observations.

\begin{lemma}
  \label{lem:thetax}
  In the setting of Theorem~\ref{th:combi}, we have that $\gamma+\beta\in\Phi_1$.
\end{lemma}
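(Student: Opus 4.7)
The plan is very short. I would argue by contradiction using the coconvexity of $\Phi_2$.

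By hypothesis $\beta+\gamma\in\Phi_3=\Phi_1\sqcup\Phi_2$, so either $\beta+\gamma\in\Phi_1$ (which is what we want) or $\beta+\gamma\in\Phi_2$. I would assume the latter and derive a contradiction. Since $\gamma\notin\Phi_3=\Phi_1\cup\Phi_2$, in particular $\gamma\notin\Phi_2$, so $\gamma\in\Phi^+\setminus\Phi_2$. Similarly $\beta\in\Phi_1$ and $\Phi_1\cap\Phi_2=\emptyset$ (because $\Phi_3=\Phi_1\sqcup\Phi_2$ is a disjoint union), so $\beta\in\Phi^+\setminus\Phi_2$.

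Now $\Phi_2$ is biconvex, hence coconvex, so $\Phi^+\setminus\Phi_2$ is a convex subset of $\Phi^+$. Since $\beta,\gamma\in\Phi^+\setminus\Phi_2$ and $\beta+\gamma\in\Phi$, convexity forces $\beta+\gamma\in\Phi^+\setminus\Phi_2$, contradicting the assumption that $\beta+\gamma\in\Phi_2$. Therefore $\beta+\gamma\in\Phi_1$.

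There is no real obstacle here: the lemma is an immediate consequence of the disjointness $\Phi_1\cap\Phi_2=\emptyset$ together with the coconvexity of $\Phi_2$, both of which are built into the hypotheses of Theorem~\ref{th:combi}. The only thing one has to notice is that Conditions (2) and (3), combined with the coconvexity of the individual biconvex pieces, leave $\Phi_1$ as the only possible home for $\beta+\gamma$.
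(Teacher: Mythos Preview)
Your proof is correct and is essentially identical to the paper's own argument: assume $\gamma+\beta\in\Phi_2$, note that $\beta\not\in\Phi_2$ (since $\beta\in\Phi_1$ and $\Phi_1\cap\Phi_2=\emptyset$) and $\gamma\not\in\Phi_2$ (since $\gamma\not\in\Phi_3$), and invoke the coconvexity of $\Phi_2$ for a contradiction. The paper's proof is just a terser version of what you wrote.
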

\begin{proof}
If   $\gamma+\beta\not\in\Phi_1$ then $\gamma+\beta\in\Phi_2$. But
$\beta\not\in \Phi_2$ and $\gamma \not\in \Phi_2$. Contradiction
with the coconvexity of $\Phi_2$.
\end{proof} 

\begin{lemma}
\label{lem:reducible root syst}
Suppose that the root system $\Phi$ is reducible, and let $\Phi=\Phi^1 \sqcup \Phi^2$ be an orthogonal decomposition of $\Phi$, for some root systems $\Phi^1$ and $\Phi^2$. 
If  Theorem~\ref{th:combi} holds for $\Phi^1$ and $\Phi^2$, then it holds for $\Phi$.
\end{lemma}

\begin{proof}
 Let $\delta \in \Phi^1$. 
 The key observation is that, for a root $\varphi \in \Phi$, we have that $\varphi \leq \delta$ (resp. $\delta \leq \varphi$) in the root system $\Phi$ if and only if $\varphi \in \Phi^1$ and $\varphi\leq \delta$ (resp. $\delta \leq \varphi$) in $\Phi^1$.

 Let $\Phi_1,\Phi_2,\Phi_3, \beta$ and $\gamma$ as in the statement of Theorem~\ref{th:combi}.
Without loss of generality, we have that $\beta \in \Phi^1$. 
Assume that $\beta < \gamma$, since otherwise there is nothing to prove.
Then, we have that $\gamma \in \Phi^1$ and $[\beta, \gamma] \subseteq \Phi^1.$
Noticing that the sets $\Phi_j^1:=\Phi^1\cap \Phi_j$ (for
$j \in \{1,2,3\}$) are biconvex,
the fact that $[\beta, \gamma] \cap \Phi_2$ is empty follows from  the aforementioned key observation and the fact that Theorem~\ref{th:combi} holds for the root system $\Phi^1$.
\end{proof}

For a positive root $\delta= \sum_{\alpha \in \Delta}n_\alpha \alpha$ of $\Phi$, we define the support of $\delta$ as
$$
\supp(\delta):= \{\alpha \in \Delta \, : \, n_\alpha \neq 0\}.
$$

From now on, for any subset $Y$ of $\Phi^+$ we denote by $Y^c:= \Phi^+ \setminus Y.$ 

\begin{lemma}
\label{lem:opssupp}
It is sufficient to prove Theorem~\ref{th:combi} when $\supp(\gamma)=\Delta$.
\end{lemma}

\begin{proof}
By Lemma~\ref{lem:reducible root syst}, we may assume that $\Phi$ is irreducible.
We argue by induction on the cardinality of $\Delta$. 
Note that if $|\Delta|=1$, Theorem~\ref{th:combi} is obvious.
By the inductive hypothesis and Lemma~\ref{lem:reducible root syst} we can assume that $\Phi$ is irreducible. 
%Moreover, by Lemma~\ref{lem:thetax} we have that $\gamma + \beta \in \Phi_1$. 
%Therefore, if $\supp(\gamma)= \Delta$, the statement follows from Proposition~\ref{prop:reformulation thm comb}.
Assume that $\supp(\gamma)$ is strictly contained in $\Delta$, and let $F$ be the real span of $\supp(\gamma)$ in $\RR \Phi$.
Observe that, for $i=1,2,3$,  the set $\Phi_{i,F}:=\Phi_i\cap F$ is biconvex in $F\cap \Phi$.
  Moreover, we have that  $\Phi_{3,F}=\Phi_{1,F}\cup\Phi_{2,F}$ and $\Phi_{1,F}\cap\Phi_{2,F}= \emptyset$.
  Note that a root $\varphi \in \Phi $ satisfies $\beta < \varphi < \gamma$ in the root system $\Phi$ if and only if 
  $\beta, \varphi \in \Phi \cap F$ and the condition $\beta < \varphi < \gamma$ holds in $\Phi \cap F$. 
 Since the cardinality of the set of simple roots of $\Phi \cap F$ is strictly smaller than $|\Delta|$, 
 the inductive hypothesis allows to conclude.
\end{proof}

From now on, we assume that
$$
\Phi\mbox{ is irreducible and }\supp(\gamma)=\Delta.
$$ 
Proving Theorem \ref{th:combi} under this assumption is sufficient because of Lemma~\ref{lem:reducible root syst} and \ref{lem:opssupp}.
In order to do so, we need to introduce some notation and recall some recent advancements on root systems and their quotients from \cite{Dim:quotroot}.

\medskip

Let $I$ be a possibly empty subset of $\Delta$, and $\ZZ I$ be the integer span of $I$ inside the real span of $\Phi.$
We denote by $\Phi_I$ the root system $\Phi \cap \ZZ I$, considered with its set of positive roots $\Phi_I^+:= \Phi^+ \cap \ZZ I.$
Let $\pi_I: \Phi \longto \ZZ \Phi / \ZZ I$ be the natural map, and consider the sets 
$$
\Phi/I:= \pi_I \bigl( \Phi \setminus \Phi_I), \qquad (\Phi/I)^+:= \pi_I \bigl( \Phi^+ \setminus \Phi_I^+).
$$
For two possibly empty sets $\Psi \subseteq (\Phi/I)+$ and $X \subseteq \Phi_I^+$, the set 
$$
\Inf_I^\Delta( \Psi, X):= \pi_I^{-1}(\Psi) \cup X
$$
is the \textit{inflation} from $I$ to $\Delta$ of $\Psi$ by $X$. 
The notion of inflation has been defined in \cite{Dim:quotroot}.
A subset $Y$ of $\Phi^+$ is said to be \textit{inflated} from $I$ is there exists  $\Psi \subseteq (\Phi/I)+$ and $X \subseteq \Phi_I^+$ such that 
$$
Y= \Inf_I^\Delta( \Psi, X).
$$
Notice that in that case the sets $\Psi$ and $X$ are unique.
If moreover the sets $I$, $\Psi$ and $X$ satisfy the conditions of \cite[Theorem 3.14]{Dim:quotroot}, we say that $Y$ is \textit{canonically inflated} from $I$.
By \cite[Theorem 3.14]{Dim:quotroot}, any subset $Y$ of $\Phi^+$ is canonically inflated from a unique subset $I$ of $\Delta$. 
Moreover, the set $I$ is a proper subset of $\Delta$. 
In other words, $I$ is strictly contained in $\Delta$.

\begin{lemma}
    \label{lem: inflation}
  There exists a proper subset $I$ of $\Delta$ satisfying the following conditions.
    \begin{enumerate}
        \item $\Phi_2$ is contained in $\Phi_I^+$.
        \item $\pi_I\bigl( \Phi_1 \setminus \Phi_I \bigr) \cap \pi_I\bigl( \Phi_3^c \setminus \Phi_I \bigr) = \emptyset.$
    \end{enumerate}
\end{lemma}

\begin{proof}
   Let $\Phi_4:= \Phi_3^c$. 
Let $i \in \{1,2,4\}$ be such that the longest root of $\Phi$ belongs to $\Phi_i$.
Moreover, let $I$ be the subset of $\Delta$ from which $\Phi_i$ is canonically inflated.
We claim that the previously defined set $I$ satisfies the requirements of the lemma.

It is clear that $I$ is a proper subset of $\Delta$ \cite[Theorem 3.14]{Dim:quotroot}.
   By \cite[Theorem 5.9]{Dim:quotroot}, there exist $j \in \{1,2,4\}\setminus \{i\}$ such that 
   $$
   \Phi_j= \Inf_I^\Delta( \emptyset, X)
   $$
   for some subset $X $ of $ \Phi_I^+$.
   By the definition of inflation, it follows that $\Phi_j=X \subseteq \Phi_I^+$.
   Since $\supp(\gamma)=\Delta= \supp(\gamma +\beta)$, neither $\gamma$ nor $\gamma + \beta$ are contained in $\Phi_I^+$. 
   It follows that $j=2$.
   Hence, the first condition of the lemma is fulfilled for the chosen set $I$.
   Then, note that if $Y$ is a subset of $\Phi^+$ which is inflated from $I$, we have that for any  $x \in (\Phi/I)^+$ either $\pi_I^{-1}(x) \cap Y$ equals $\pi_I^{-1}(x)$ or it is empty.
Moreover, \cite[Theorem 5.9]{Dim:quotroot} implies that $\Phi_1$ and $\Phi_4$ are both inflated from $I$.
Therefore, since $\Phi_1$ and $\Phi_4$ are disjoint, the second condition of the lemma holds for the chosen $I$.
\end{proof}

   We are now ready to complete the proof of Theorem \ref{th:combi}. Let $I$ as in Lemma~\ref{lem: inflation}. 
    By contradiction, let $\varphi$ be a root satisfying $\beta \leq \varphi$ and $\varphi \in \Phi_2$. 
    It follows that $\beta \in \Phi_I$. 
    Moreover, since $\supp(\gamma)= \Delta= \supp(\gamma + \beta)$, we have that neither $\gamma$ nor $\gamma + \beta$ belongs to $\Phi_I$. 
  Then, the fact that $\pi_I(\gamma)=\pi_I(\gamma + \beta)$ and Lemma \ref{lem:thetax} contradict the second assertion of Lemma~\ref{lem: inflation}.

\section{The proofs of the corollaries}
\label{sec:pf coro}

\begin{proof}[Proof of Corollary~\ref{cor:Bruhatorder}.]
  Theorem~\ref{th:main} implies that
  $[X_{w_1}]\cdot[X_{w_2}]\cdot [X_{w_3}]=[pt]$.
  On the other hand $w_1\inv X_{w_1}$,  $w_2\inv
  X_{w_2}$ and $w_3\inv X_{w_3}$ intersect transversally
  at the point $\uo$. It follows that

  \begin{equation}
w_1\inv X_{w_1}\cap w_2\inv
  X_{w_2}\cap w_3\inv X_{w_3}=\{\uo\}.\label{eq:inter}
\end{equation}
  Given $x\in W$, we have
  $x\in w_1\inv X_{w_1}$ if and only if $w_1 x\leq w_1$.
  Then the statement translates the fact that $e\uo$ is the only point
   in the intersection~\eqref{eq:inter}.
\end{proof}

\begin{proof}[Proof of Corollary~\ref{cor:D}.]

Let $E_1, \dots, E_s$ be the irreducible components of $X - \Omega$ of codimension one in $X$. 
The same argument of the proof of Proposition~\ref{prop:divPoincare} implies that $s=2 \rk(G)$, and that  
exactly $3\rk(G)$ irreducible components of $Y - Y^\circ$ 
of codimension one are not contracted by $\eta$. 
Because of Proposition~\ref{prop:rameta} there are exactly 
$\sum_i d(w_i^\vee)$ such divisors.

%By Proposition~\ref{}, $\sum_i d(w_i^\vee)$ is the number of
%components of $Y-Y^\circ$ that are not contracted by $\eta$.
%By ???, this is also the  number of
%components of $X-\Omega$.
%The exact sequence~\eqref{eq:Picexact} shows that there are $2\rk(G)$
%such components.

\end{proof}

\begin{proof}[Proof of Corollary~\ref{cor:regface}.]
  The fact that ${\mathcal F}_{(w_1,w_2,w_3)}$ is a regular face of the cone
  $\cLR(G)$ is a direct application of \cite[Theorem~D]{GITEigen}. 
Idem for the fact that any regular face of dimension $2\rk(G)$ is
obtained in such a way.
The fact that any integral point in ${\mathcal F}_{(w_1,w_2,w_3)}$ belongs to
the semigroup $\LR(G)$ is a consequence of the PRV conjecture (see
\cite{Kumar:prv1,Mathieu:prv} or \cite{MPR}).

Let $E_1,\dots,E_{2\rk(G)}$ be the irreducible components of
$X-\Omega$ of codimension one in $X$.
Each $E_i$ gives a line bundle $\Oc(E_i)$ on $X$ and a section
$\sigma_i\in \Ho^0(X,\Oc(E_i))$ such that $\div(\sigma_i)=E_i$.
Since $G$ is semisimple and simply connected, $\Oc(E_i)$ admits a
unique $G$-linearisation.
Thus, there exists $(\lambda_1^i,\lambda_2^i,\lambda_3^i)\in X(T)^3$
such that $\Oc(E_i)=\Li(\lambda_1^i,\lambda_2^i,\lambda_3^i)$.
Since $E_i$ is $G$-stable and $G$ has no character, $\sigma_i$ is
$G$-invariant. 
Then, $(\lambda_1^i,\lambda_2^i,\lambda_3^i)$ belongs to $\LR(G)$.

By construction $\sigma_i([e:x_0])\neq 0$, where $x_0=(w_1\inv \uo, w_2\inv \uo, w_3\inv \uo)$.
Since $[e:x_0]$ is fixed by the maximal torus $T$ and $\sigma_i$ is
$G$-invariant, $T$ has to act trivially on the fiber in $\Oc(E_i)$
over $[e:x_0]$. 
Thus, $w_1\inv\lambda_1^i+w_2\inv\lambda_2^i+w_2\inv\lambda_3^i=0$ and 
$(\lambda_1^i,\lambda_2^i,\lambda_3^i)$ belongs to
${\mathcal F}_{(w_1,w_2,w_3)}$.

Conversely, let $(\lambda_1,\lambda_2,\lambda_3)$ in
${\mathcal F}_{(w_1,w_2,w_3)}$. Set $\Li=\Li(\lambda_1,\lambda_2,\lambda_3)$.
By {\it e.g.} \cite[Theorem~1.2]{reduction}, the restriction map
induces an isomorphism from $\Ho(X,\Li)^G$ onto
$\Ho^0(\{x_0\},\Li)^T\simeq\CC$.
 Fix a nonzero element $\sigma$ in $\Ho(X,\Li)^G$.
From $\sigma(x_0)\neq 0$, one easily deduces that $\sigma$ does not
vanish on $\Omega=\eta(G\times_B C^+)$ using $G$-invariance and continuity. 
Then there exist nonnegative integers $n_i$ such that
$\div(\sigma)=\sum_{i=1}^{2\rk(G)} n_i E_i$.
In particular $\Li=\sum_i n_i\Oc(E_i)$ and
$(\lambda_1,\lambda_2,\lambda_3)$ belongs to the semigroup generated
by the triples $(\lambda_1^i,\lambda_2^i,\lambda_3^i)$.
\end{proof}

\section{A determinant}
\label{sec: A determinant}

Fix a poset $\{\varphi_0,\dots,\varphi_k\}$ numbered in such a way
that 
$\varphi_i\leq\varphi_j$ only if $i\leq j$. 
Let $M$ be a $(k\times k)$-matrix whose rows are labeled by
$(\varphi_0,\dots,\varphi_{k-1})$ and  columns by
$(\varphi_1,\dots,\varphi_{k})$. 
Denote by $m_{ij}$ the entry at row $\varphi_i$ and column $\varphi_j$. 
We assume that
\begin{enumerate}
\item for any $i=1,\dots,k-1$, $m_{ii}=1$; and
\item $m_{ij}\neq 0$ implies $\varphi_i\leq\varphi_j$.
\end{enumerate}

\begin{prop}
  \label{prop:calculdet}
With above notation, the determinant of $M$ is
$$
\det M=(-1)^{k+1}\sum_{
  \begin{array}{c}
0\leq s\leq k-1\\
0< j_0<\cdots<j_s<k\\
\varphi_0<\varphi_{j_0}<\cdots<\varphi_{j_s}<\varphi_{k}
  \end{array}
}
(-1)^s m_{0\,j_0}m_{j_0\,j_1}\cdots m_{j_s\,k}.
$$
\end{prop}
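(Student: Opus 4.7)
My plan is induction on $k$, via cofactor expansion along the first column of $M$ (the column indexed by $\varphi_1$). The key observation is that this column has at most two nonzero entries: $m_{0,1}$ on top and $m_{1,1}=1$ in the second position. Indeed, for $i\geq 2$ one has $m_{i,1}=0$, since $m_{i,1}\neq 0$ would force $\varphi_i\leq\varphi_1$ by hypothesis, contradicting the numbering convention (which forbids $i\geq 2$ paired with $\varphi_i\leq\varphi_1$). Hence
\[
\det M \;=\; m_{0,1}\,\det M' \;-\; \det M'',
\]
where $M'$ and $M''$ are the $(k-1)\times(k-1)$ minors obtained by deleting, respectively, row $\varphi_0$ and row $\varphi_1$, in each case together with the column $\varphi_1$.

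Next, I would observe that both $M'$ and $M''$ are themselves matrices satisfying the hypotheses of the proposition: $M'$ relative to the subposet $\{\varphi_1,\ldots,\varphi_k\}$, and $M''$ relative to the subposet $\{\varphi_0,\varphi_2,\ldots,\varphi_k\}$ (after renumbering). The conditions $m_{ii}=1$ and ``$m_{ij}\neq 0\Rightarrow \varphi_i\leq\varphi_j$'' both transfer to the minors without effort. Applying the inductive hypothesis then expresses $m_{0,1}\det M'$ and $\det M''$ as signed sums over chains in the respective posets.

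The chains in $m_{0,1}\det M'$ are of the form $\varphi_1<\varphi_{j_0}<\cdots<\varphi_{j_s}<\varphi_k$; prepending the edge $\varphi_0\to\varphi_1$ makes them correspond exactly to those chains in the right-hand side of the statement whose first intermediate vertex is $\varphi_1$. The chains in $\det M''$ correspond, after renumbering, exactly to those chains in the statement with all $j_i\geq 2$, i.e.\ the chains that skip $\varphi_1$ entirely. Together these two families partition the set of chains appearing in the sum, so the only remaining task is to check that the signs line up on both sides.

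The main obstacle is precisely this sign bookkeeping. Three effects are in play: the prefactor $(-1)^{k+1}$ drops to $(-1)^k$ when passing to $(k-1)\times(k-1)$ minors; the interior factor $(-1)^s$ is affected by the shift $s\mapsto s+1$ when we prepend $\varphi_1$ in the $m_{0,1}\det M'$ summand; and there is a minus sign in front of $\det M''$ coming from the cofactor expansion itself. A routine case check shows that these three contributions combine so that every chain acquires exactly the prescribed overall sign $(-1)^{k+1}(-1)^s$, matching the statement. The base case ($k=1$, or the smallest case in which both sides have content) is a direct computation, which closes the induction.
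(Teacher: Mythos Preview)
Your proof is correct and takes a genuinely different route from the paper's.

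The paper expands $\det M$ directly via the Leibniz formula $\det M=\sum_\sigma\varepsilon(\sigma)\prod_j m_{\sigma(j),j}$, then observes that the ``almost upper-triangular'' shape forces $\sigma(j)\leq j$ for every $j$. Such bijections $\sigma$ correspond, after the shift $\tilde\sigma(j)=\sigma(j)+1$, exactly to permutations of $[1,k]$ whose cycle decomposition is a bracketing $(1,\dots,j_0)(j_0+1,\dots,j_1)\cdots(j_s+1,\dots,k)$ of the word $1\,2\,\cdots\,k$ into consecutive blocks. Each block contributes a single factor $m_{j_{t-1},j_t}$ (the diagonal entries being $1$), and the signature is read off from the block lengths. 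This gives the chain formula in one stroke, without induction.

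Your argument instead expands along the first column, isolates the two nonzero entries $m_{0,1}$ and $m_{1,1}=1$, and reduces to the $(k-1)\times(k-1)$ minors $M'$ and $M''$. The key structural point you use---that both minors again satisfy the hypotheses of the proposition for the subposets $\{\varphi_1,\dots,\varphi_k\}$ and $\{\varphi_0,\varphi_2,\dots,\varphi_k\}$---is correct, and the partition of chains into ``those passing through $\varphi_1$'' versus ``those skipping $\varphi_1$'' is exactly right. The sign bookkeeping, which you outline, does work out: in the $m_{0,1}\det M'$ summand the drop $(-1)^{k+1}\to(-1)^k$ is compensated by the shift $s\mapsto s+1$, while in the $-\det M''$ summand the cofactor sign $(-1)$ absorbs the drop in the prefactor.

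What each buys: the paper's approach is non-inductive and exhibits an explicit bijection between contributing permutations and chains, which is combinatorially more transparent (and explains the remark about the M\"obius function at the end of the section). Your approach is more mechanical and perhaps easier to verify line by line, but hides the permutation--chain correspondence. Both are short and adequate for the purpose.
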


\begin{proof}
  Start with the expression

  \begin{equation}
\det(M)=\sum_\sigma \varepsilon(\sigma) m_{\sigma(1)\,1}\dots
m_{\sigma(k)\,k},
\label{eq:det2}
\end{equation}
where the sum runs over all the bijections
$\sigma\;:\;[1;k]\longto[0;k-1]$.
Here, $\varepsilon(\sigma)$ is the signature of the
bijection $\tilde \sigma$ of $[1;k]$ on itself that maps $j$ to $\sigma(j)+1$.

Since $M$ is ``almost upper triangular'', in the sum~\eqref{eq:det2},
we can keep only the bijections $\sigma$ satisfying:  $\sigma(j)\leq j$
for any $j\in [1;k]$.
Define $j_0$ by $\sigma(j_0)=0$.
Then, for any $1\leq j<j_0$, we have $\sigma(j)=j$.
In other words, the bijection $\tilde\sigma$ stabilizes $[1;j_0]$ and
acts on it as the cycle $(1,2,\dots,j_0)$.
Moreover,
$$
m_{\sigma(1)\,1}\dots m_{\sigma(j_0)\,j_0}=m_{0\,j_0},
$$
since all the other factors are of the form $m_{jj}=1$.

Now, an immediate induction shows that the expression of $\tilde
\sigma$ as a product of disjoint  cycles can be obtained by
bracketing the word $1\;2\;\dots k$.
Write
$$
\tilde\sigma=(1,\;2,\dots,j_0)(j_0+1,\dots,j_1)\cdots (j_s+1,\dots,k)
$$
allowing cycles of length 1. Then the product, associated to $\sigma$ in \eqref{eq:det2},
is
$$
m_{0\,j_0}m_{j_0\,j_1}\dots m_{j_{s-1}j_{s}}  m_{j_s\, {k}},
$$
and, the signature $\varepsilon(\sigma)$ is
$(-1)^{(j_0-1)+(j_1-j_0-1)+\cdots+(k-j_{s}-1)}=(-1)^{k+s+1}$.
The proposition follows.
\end{proof}

\begin{remark}
  {\it A determinantal expression of the Möbius function $\mu$ for finite
    posets.}
Let $(P,\leq)$ be a finite poset and $[\varphi_0;\varphi_k]=\{\varphi_0,\dots,\varphi_k\}$ be an
interval of $P$.
Let $M$ be the $(k\times k)$-matrix whose rows are labeled by
$[0,k-1]$ and  columns by
$[1,k]$ defined by  
$$
m_{ij} =\left\{
  \begin{array}{lll}
    1&{\rm\ if\ }&\varphi_i\leq\varphi_j,\\
0&{\rm\ otherwise}.
  \end{array}
\right . $$
The proof of Proposition~\ref{prop:calculdet} shows that
$$
\mu([\varphi_0;\varphi_k])=(-1)^k\det M.
$$
The authors do not know if this formula was known before.
\end{remark}

%\section{Appendix: a determinantal expression of the Möbius function}

%\bibliographystyle{elsarticle-harv}
\bibliographystyle{alpha}

\bibliography{bkgb}

\begin{center}
  -\hspace{1em}$\diamondsuit$\hspace{1em}-
\end{center}

\end{document}